\newtheorem{theorem}{Theorem}[section]
\newtheorem{lemma}[theorem]{Lemma}
\newtheorem{corollary}[theorem]{Corollary}
\newtheorem{proposition}[theorem]{Proposition}
\theoremstyle{definition}
\newtheorem{definition}[theorem]{Definition}
\newtheorem{question}[theorem]{Question}
\numberwithin{equation}{section}
\numberwithin{figure}{section}
\newcommand{\Z}{\mathbb{Z}}
\newcommand{\R}{\mathbb{R}}
\newcommand{\Q}{\mathbb{Q}}
\newcommand{\cm}{\cdot}
\begin{document}

\title{Knot Concordance and Homology Cobordism}


\author[Cochran]{Tim D. Cochran}
\address{Department of Mathematics\\Rice University\\P.O. Box 1892\\Houston, TX 77251}
\curraddr{}
\email{cochran@math.rice.edu}
\thanks{Partially supported by National Science Foundation  DMS-1006908}

\author[Franklin]{Bridget D. Franklin}
\address{Department of Mathematics\\Rice University\\P.O. Box 1892\\Houston, TX 77251}
\curraddr{}
\email{bridget.franklin@rice.edu}
\thanks{Partially supported by Nettie S. Autry Fellowship}

\author[Hedden]{Matthew Hedden}
\address{Department of
Mathematics\\Michigan State University\\East Lansing, MI 48824} 
\email{mhedden@math.msu.edu}
\thanks{Partially supported by NSF DMS-0906258}

\author[Horn]{Peter D. Horn}
\address{Department of Mathematics\\Columbia University\\2990 Broadway\\New York, NY 10027}
\curraddr{}
\email{pdhorn@math.columbia.edu}
\thanks{Partially supported by NSF Postdoctoral Fellowship DMS-0902786}

\subjclass[2010]{Primary 57N70, 57M25.}

\date{19 February, 2011}

\dedicatory{}

\commby{Daniel Ruberman}

\begin{abstract} We consider the question: ``If the zero-framed surgeries on two oriented knots in $S^3$ are $\mathbb{Z}$-homology cobordant, preserving the homology class of the positive meridians, are the knots themselves concordant?''  We show that this question has a negative answer in the smooth category, even for topologically slice knots. To show this we first prove that  the zero-framed surgery on $K$ is $\Z$-homology cobordant to the zero-framed surgery on many of its winding number one  satellites $P(K)$. Then we prove that in many cases the $\tau$ and $s$-invariants of $K$ and $P(K)$ differ. Consequently neither $\tau$ nor $s$ is an invariant of the smooth homology cobordism class of the zero-framed surgery. We also show, that a natural rational version of this question has a negative answer in both the topological and smooth categories, by proving similar results for $K$ and its $(p,1)$-cables. 
\end{abstract}

\maketitle


\section{Introduction}\label{sec:intro}

A \emph{knot} $K$ is the image of a tame embedding of an oriented circle into $S^3$. Two knots, $K_0\hookrightarrow S^3\times \{0\}$ and $K_1\hookrightarrow S^3\times \{1\}$, are \emph{CAT concordant} (CAT= smooth or topological locally flat) if there exists a proper CAT embedding of an annulus into $S^3\times [0,1]$ that restricts to the knots on $S^3\times \{0,1\}$. A \emph{CAT slice knot} is one that is the boundary of a CAT embedding of a $2$-disk in $B^4$. Recall that to any knot is associated a closed $3$-manifold, $M_K$, called the \emph{zero-framed surgery on $K$}, which is obtained from $S^3$ by removing a solid torus neighborhood of $K$ and replacing it with another solid torus in such a way that the longitude of $K$ bounds the meridian of the new solid torus. The following purely $3$-dimensional question has been widely studied: to what extent does the  homeomorphism type of $M_K$ (or any other fixed Dehn surgery) determine the knot type of $K$? The questions we consider are the analogous $4$-dimensional questions. Specifically, the question of whether two knots, $K_0$ and $K_1$, are CAT concordant is closely related to the question of whether or not the $3$-manifolds, $M_{K_0}$ and $M_{K_1}$, are CAT homology cobordant. For if the knots are concordant via a CAT embedded annulus $A$, then by Alexander duality, the exterior of $A$ is a homology cobordism relative boundary between the exteriors of the knots. Moreover the positively-oriented meridians of the knots are isotopic in this exterior. It follows, by adjoining $S^1\times D^2\times [0,1]$ (i.e. one does ``zero-framed surgery on the annulus''), that $M_{K_0}$ and $M_{K_1}$ are homology cobordant \emph{rel meridians}. In this paper the latter will mean that  the positively-oriented meridians of the knots represent the same class in the first homology group of the cobordism. The converse is a long-standing open question:

\begin{question}\label{question:Kirby} If $M_{K_0}$ and $M_{K_1}$ are homology cobordant rel meridians (if necessary also assume that $\pi_1$ of the cobordism is normally generated by either meridian), then are $K_0$ and $K_1$ concordant?  (See Kirby's Problem List ~\cite[Problem $1.19$ and remarks]{Kproblemlist}).
\end{question}

The original question, without the meridian-preserving condition, was shown to have a negative answer by Livingston, who gave examples in ~\cite{Li7} of knots that are not concordant to their own reverses (change the orientation of the circle).

Evidence for a positive answer to Question~\ref{question:Kirby} is provided by the following well-known result.

\begin{proposition}\label{prop:basichomcobordism}  Suppose $K$ is a knot and $U$ is the trivial knot. Then the following are equivalent:
\begin{itemize}
\item [a.] $M_K$ is CAT homology cobordant rel meridians  to $M_U$  via a cobordism $V$ whose $\pi_1$ is normally generated by a meridian of $K$.
\item [b.] $M_K=\partial W$, where $W$ is a CAT manifold  that is a homology circle and whose $\pi_1$ is normally generated by the meridian.
\item [c.] $K$ bounds a CAT embedded $2$-disk in a CAT manifold $\mathcal{B}$ that is homeomorphic to $B^4$.
\item [d.] $K$ is CAT concordant to $U$ in a CAT $4$-manifold that is homeomorphic to $S^3\times [0,1]$.
\end{itemize}
Moreover $d \Rightarrow a$ even if $U$ is not assumed to be the trivial knot.
\end{proposition}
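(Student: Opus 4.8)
The proposition asserts the equivalence of four statements relating homology cobordism of zero-surgeries to slicing $K$ in a homotopy ball. The plan is to prove the cycle of implications $c \Leftrightarrow d$, $c \Rightarrow b$, $b \Rightarrow a$, and $a \Rightarrow b$ (or directly $a \Rightarrow c$), together with the addendum $d \Rightarrow a$ for arbitrary $U$. The main tools are Alexander duality, the surgery-on-an-annulus / surgery-on-a-disk construction already described in the introduction, and careful bookkeeping of fundamental groups and meridians.

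For $c \Leftrightarrow d$: given a $2$-disk $D$ for $K$ in $\mathcal{B} \cong B^4$, remove a tubular neighborhood of $D$ and reglue to see the knot exterior sitting inside $\mathcal{B} \setminus \nu D$; conversely a concordance from $K$ to $U$ in $\mathcal{B}' \cong S^3\times[0,1]$ can be capped off on the $U$ side by the standard slice disk for $U$ in $B^4$, yielding a disk for $K$ in $\mathcal{B}'\cup B^4 \cong B^4$. For $c \Rightarrow b$: take $W = \mathcal{B} \setminus \nu D$; by Alexander duality $W$ is a homology $S^1$, its boundary is $M_K$ (the meridian of $D$ becomes the surgery dual), and van Kampen applied to $\mathcal{B} = W \cup (D^2 \times D^2)$ shows $\pi_1 W$ is normally generated by the meridian since $\pi_1 \mathcal{B} = 1$. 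For $b \Rightarrow a$: given such a $W$ bounding $M_K$, build the cobordism $V$ by attaching a $2$-handle to $W$ along a meridian of $K$ (framing $0$) — equivalently, form $V = W \natural (\text{product piece})$ glued appropriately so that one boundary component becomes $M_K$ and the other becomes $M_U = S^1 \times S^2$; verify via Mayer–Vietoris that $V$ is a homology cobordism, that the meridians are identified in $H_1$, and that $\pi_1 V$ is normally generated by the meridian. For $a \Rightarrow b$ (closing the cycle): given $V$ from $M_K$ to $M_U = S^1\times S^2$, cap off the $M_U$ end with $S^1 \times D^3$ (which kills nothing in homology relevant to being a homology circle and is built from a $2$- and a $4$-handle) to obtain $W$; check the $\pi_1$ condition survives.

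For the addendum $d \Rightarrow a$ with $U$ arbitrary: a concordance $A$ from $K_0 = K$ to $K_1 = U$ inside some $\mathcal{X} \cong S^3 \times [0,1]$ has, by Alexander duality, exterior $\mathcal{X} \setminus \nu A$ which is a homology cobordism rel boundary from the exterior of $K$ to the exterior of $U$; performing zero-framed surgery on $A$ (adjoining $S^1 \times D^2 \times [0,1]$) produces the desired $V$ from $M_K$ to $M_U$, and the meridians of $K$ and $U$ are isotopic across $A$ hence represent the same $H_1$ class — this is exactly the argument sketched in the introduction, and the $\pi_1$-normal-generation claim holds because the meridian normally generates $\pi_1$ of any knot exterior, hence normally generates $\pi_1$ of $\mathcal{X}\setminus\nu A$ by a Mayer–Vietoris/van Kampen argument using $\pi_1 \mathcal{X} = 1$.

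**Main obstacle.** The routine parts are the Alexander-duality homology computations; the delicate part is tracking the $\pi_1$ conditions through each construction — in particular verifying in $b \Leftrightarrow a$ that attaching or removing the $S^1$-factor piece neither destroys nor is needed for normal generation by the meridian, and ensuring in $c \Rightarrow b$ that the meridian of $K$ (a specific curve on $M_K$) really is the curve that normally generates $\pi_1 W$. One must be careful that "meridian" consistently refers to the image of the meridian of $K$ under the relevant identifications, and that orientations are handled so that "rel meridians" (same positively-oriented class in $H_1$) is genuinely preserved, not just "some meridian." I expect no essential difficulty beyond this bookkeeping, since each implication is a standard cut-and-paste of $4$-manifolds; the content of the proposition is organizational, assembling known facts into the clean list that motivates Question~\ref{question:Kirby}.
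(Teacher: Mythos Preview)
Your chain of implications does not close. You establish $c \Leftrightarrow d$, $c \Rightarrow b$, $b \Rightarrow a$, and $a \Rightarrow b$, but nothing in your outline gets from $\{a,b\}$ back to $\{c,d\}$. You flag ``(or directly $a \Rightarrow c$)'' without ever saying how to do it, and this is not a bookkeeping step: it is exactly where Freedman's theorem is needed. The paper's route is $b \Rightarrow c$: attach a $0$-framed $2$-handle to $\partial W = M_K$ along the meridian; the result $\mathcal{B}$ is a simply-connected homology $4$-ball with boundary $S^3$, hence homeomorphic to $B^4$ by Freedman, and the core of the handle is the slice disk for $K$. Your $c \Rightarrow b$ (take the disk exterior) is the easy direction; the converse is the substantive one, and your remark that ``the content of the proposition is organizational'' undersells this.

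Separately, your description of $b \Rightarrow a$ is garbled. Attaching a $2$-handle to $W$ (which has a single boundary component $M_K$) produces a $4$-manifold with a single boundary component, not a cobordism from $M_K$ to $M_U$; in fact that construction is precisely the paper's $b \Rightarrow c$, yielding $\mathcal{B}$ with $\partial \mathcal{B} = S^3$. A correct $b \Rightarrow a$ would be to \emph{remove} an open $S^1 \times B^3$ neighborhood of a pushed-in meridian from $W$ (the inverse of the paper's $a \Rightarrow b$, which glues $S^1 \times B^3$ onto the $M_U = S^1 \times S^2$ end of $V$). Your $a \Rightarrow b$ and the addendum $d \Rightarrow a$ match the paper's arguments.
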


In particular the case $a \Rightarrow d$ of Proposition~\ref{prop:basichomcobordism} in the TOP category yields:
\begin{corollary}\label{cor:truTOP} Question~\ref{question:Kirby} has a positive answer in the topological category if one of the knots is the trivial knot. The same is true in the smooth category if $B^4$ has a unique smooth structure up to diffeomorphism.
\end{corollary}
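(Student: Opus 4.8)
The plan is to obtain Corollary~\ref{cor:truTOP} as a purely formal consequence of Proposition~\ref{prop:basichomcobordism}. Suppose one of the two knots, say $K_1$, is the trivial knot $U$; write $K=K_0$ and assume that $M_K$ and $M_U$ are CAT homology cobordant rel meridians via a cobordism whose fundamental group is normally generated by a meridian of $K$. (This is precisely the hypothesis of Question~\ref{question:Kirby} in the case that one of the knots is trivial, with the optional normal-generation assumption included; equivalently, it is statement (a) of Proposition~\ref{prop:basichomcobordism}.) Applying the implication $a\Rightarrow c$ of that proposition produces a CAT $4$-manifold $\mathcal{B}$ homeomorphic to $B^4$ in which $K$ bounds a CAT embedded $2$-disk $\Delta$.

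In the topological category this essentially finishes the job: a topological $4$-manifold homeomorphic to $B^4$ \emph{is} $B^4$, so $\Delta$ exhibits $K$ as topologically slice. One then passes from ``slice'' to ``concordant to $U$'' in the usual way: choose an interior point of $\Delta$, excise a standard $4$-ball around it in which $\Delta$ looks like a standard $2$-disk, and observe that the complement of that ball in $B^4$ is $S^3\times[0,1]$ while what remains of $\Delta$ is a locally flat annulus running from $K$ to an unknotted circle. Hence $K$ and $U$ are topologically concordant. (One could instead quote $a\Rightarrow d$ directly and skip this last step.)

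In the smooth category the extra hypothesis that $B^4$ has a unique smooth structure up to diffeomorphism is exactly what is needed to upgrade ``$\mathcal{B}$ is homeomorphic to $B^4$'' to ``$\mathcal{B}$ is diffeomorphic to $B^4$''. Transporting $\Delta$ through such a diffeomorphism (and absorbing the induced diffeomorphism of $\partial\mathcal{B}=S^3$, which is isotopic to the identity) shows that $K$ is smoothly slice, and the same ball-removal construction — using a standard small ball around an interior point of the disk, so that no Schoenflies-type issue arises — makes it smoothly concordant to $U$. I do not expect a substantive obstacle anywhere: all the real content has already been packaged into Proposition~\ref{prop:basichomcobordism}, and what remains is bookkeeping, namely matching up the normal-generation hypotheses so that (a) genuinely applies and recalling the standard equivalence of ``slice'' with ``concordant to the unknot.'' The one thing worth flagging is that the smoothness hypothesis on $B^4$ is not decorative: the thrust of the rest of the paper is precisely that the smooth analogue of Question~\ref{question:Kirby} behaves very differently, but establishing that lies well outside the scope of this corollary.
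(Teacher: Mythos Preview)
Your proof is correct and follows the same route as the paper: the paper simply says that the implication $a\Rightarrow d$ of Proposition~\ref{prop:basichomcobordism} in the topological category yields the corollary, and you have unpacked exactly that chain (going $a\Rightarrow c\Rightarrow d$, which is how the proposition itself is proved). Your treatment of the smooth case is more explicit than the paper's, which merely asserts it, and your remark about absorbing the boundary diffeomorphism via Cerf's theorem is a correct and useful detail.
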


\begin{proof}[Proof of Proposition~\ref{prop:basichomcobordism}] The implication $d\Rightarrow a$ has already been explained above (and is true for any two knots). Note that $M_U\cong S^1\times S^2$, which is the boundary of the homology circle $S^1\times B^3$. Adjoining the latter to one end of the homology bordism $V$, provided by $a$, yields $W$ which shows $a\Rightarrow b$. Given $W$ from $b$, add a zero-framed $2$-handle to $\partial W$ along the meridian of $K$. The resulting CAT manifold $\mathcal{B}$ is a simply-connected homology $4$-ball whose boundary is $S^3$. By Freedman's theorem, $\mathcal{B}$ is homeomorphic to $B^4$ ~\cite{Fr1}. The core of the attached $2$-handle is a flat disk whose boundary is a copy of $K\hookrightarrow S^3$. Thus $b\Rightarrow c$. To show $c\Rightarrow d$, merely remove a small $4$-ball from $\mathcal{B}$ centered at a point of the $2$-disk.
\end{proof}

Motivation for Question~\ref{question:Kirby} was provided by the following observation, whose proof is similar to the proof of $a \Rightarrow d$ above. If the \textit{exteriors} of the knots $K_0$ and $K_1$ are CAT-homology cobordant relative boundary via a cobordism whose $\pi_1$ is normally-generated by a meridian, then $K_0$ and $K_1$ are CAT-concordant in $S^3\times [0,1]$ equipped, in the smooth case, with a possibly exotic smooth structure. Thus (modulo the smooth $4$-dimensional Poincar\'{e} conjecture) concordance truly is equivalent to  homology cobordism of the knot exterior (with the added $\pi_1$ condition).

Nonetheless, in this paper we show that Question~\ref{question:Kirby} has a \emph{negative} answer in the smooth category, even for topologically slice knots.  We do not resolve it in the topological category. To accomplish this we first prove in Section~\ref{sec:mainresult}  that 

\newtheorem*{cor:windingonesatelliteresult}{Corollary~\ref{cor:windingonesatelliteresult}}
\begin{cor:windingonesatelliteresult}   Suppose $P$ is a knot embedded in a solid torus with winding number $1$  such that $P$ is unknotted when the solid torus is embedded in $S^3$ in the standard unknotted fashion. Then, for any knot $K$,  the zero-framed surgery on the satellite knot $P(K)$  is smoothly $\Z$-homology cobordant rel meridians to zero-framed surgery on $K$.
\end{cor:windingonesatelliteresult}

Then in Section~\ref{sec:Zhomologycobordism} we give an example of a $P$ and a topologically slice knot $K$ such that $K$ and $P(K)$ are  \textit{not} smoothly concordant. Combining these results we have a negative answer to Question~\ref{question:Kirby} in the smooth category.

\newtheorem*{thm:smooth} {Theorem~\ref{thm:smooth}}\begin{thm:smooth} There exist topologically slice knots whose  zero surgeries are smoothly $\Z$-homology cobordant rel meridians, but which are not smoothly concordant.
\end{thm:smooth}

Moreover,  our examples can be distinguished by the smooth concordance invariants $\tau$ and $s$ \cite{OzSz2,RasThesis,RasSlice},  so:

\newtheorem*{cor:difftau}{Corollary~\ref{cor:difftau}} \begin{cor:difftau} The smooth knot concordance invariants $\tau$ and $s$ are not invariants of the smooth homology cobordism class of $M_K$.
\end{cor:difftau}  

Proposition~\ref{prop:basichomcobordism} has an analogue for \emph{rational} homology cobordism. In particular it is related to notions of \emph{rational concordance} which have been  previously studied \cite{CO1,ChaKo1,ChaKo2,Gi6, Ka4} and treated systematically by Cha in ~\cite{Cha2}. Before stating this analogue, we review some terminology.

Suppose that $R\subset\Q$ is a non-zero subring. Recall that a space $X$ \emph{is an $R$-homology $Y$} means that $H_*(X;R)\cong H_*(Y;R)$.  Knots $K_0$ and $K_1$ in $S^3$ are said to be \emph{CAT $R$-concordant} if there exists a compact, oriented CAT $4$-manifold $W$, that is an $R$-homology $S^3\times [0,1]$, whose boundary is $S^3\times \{0\}\sqcup -(S^3\times \{1\})$, and in which there exists a properly CAT embedded annulus $A$ which restricts on its boundary to the given knots. We then say that $K_0$ is \emph{CAT $R$-concordant} to $K_1$ \emph{in $W$}. We say that $K$ is \emph{CAT $R$-slice} if it is CAT $R$-concordant to $U$, or equivalently if it bounds a CAT embedded $2$-disk in an $R$-homology $4$-ball whose boundary is $S^3$. The latter notion agrees with ~\cite{ChaKo1,Cha2} but is what Kawauchi calls weakly $\Q$-slice \cite{Ka4}.

\begin{definition}\label{def:relmer} $M_{K_0}$ is $R$\emph{-homology cobordant  rel meridians} to $M_{K_1}$  if these two $3$-manifolds are $R$-homology cobordant via a $4$-manifold $V$ such that in $H_1(V;R)$ the positively-oriented meridians differ by a positive unit of $R$.
\end{definition}

\begin{proposition}\label{prop:Rbasichomcobordism}  Suppose $K$ is a knot in $S^3$ and $R\subset\Q$ is a non-zero subring. Then the following are equivalent:
\begin{itemize}
\item [a.] $M_K$ is CAT $R$-homology cobordant rel meridians to $M_U$.
\item [b.] $M_K=\partial W$, where $W$ is a CAT manifold  that is an $R$-homology circle.
\item [c.] $K$ is CAT $R$-slice.
\item [d.] $K$ is CAT $R$-concordant to $U$.
\end{itemize}
Moreover $d \Rightarrow a$ for any two knots.
\end{proposition}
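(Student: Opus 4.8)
The plan is to follow the proof of Proposition~\ref{prop:basichomcobordism} almost verbatim; the rational statement is in fact \emph{easier}, because being $R$-slice only requires bounding a disk in an $R$-homology $4$-ball with boundary $S^3$ — not in $B^4$ nor in anything homeomorphic to it — so Freedman's theorem and all hypotheses on fundamental groups can be dispensed with. The implication $d\Rightarrow a$ is handled exactly as in the integral case, for any two knots $K_0,K_1$: starting from an $R$-concordance $A$ inside an $R$-homology $S^3\times[0,1]$, an Alexander-duality computation with $R$-coefficients shows the exterior of $A$ has the $R$-homology of $(S^3\setminus\nu K_0)\times[0,1]$, and doing ``zero-framed surgery on the annulus'' (adjoining $S^1\times D^2\times[0,1]$) yields an $R$-homology cobordism $V$ from $M_{K_0}$ to $M_{K_1}$ in which the two meridians cobound a cylinder and hence agree in $H_1(V;R)$.

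For $a\Rightarrow b$: since $M_U\cong S^1\times S^2=\partial(S^1\times B^3)$ and $S^1\times B^3$ is an $R$-homology circle, gluing it onto the $M_U$-end of the cobordism $V$ from (a) produces a CAT manifold $W$ with $\partial W=M_K$, and a Mayer--Vietoris computation — using that $H_*(M_K;R)\cong H_*(S^1\times S^2;R)$ for every knot $K$, so that $H_*(V;R)\cong H_*(S^1\times S^2;R)$ — shows $W$ is an $R$-homology circle.

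The heart of the matter is $b\Rightarrow c$. Given an $R$-homology circle $W$ with $\partial W=M_K$, I would first check that the meridian $\mu$ of $K$ maps to a \emph{generator} of $H_1(W;R)\cong R$: Poincar\'e--Lefschetz duality with $R$-coefficients, together with the fact that $H_1(W;R)\cong R$ is $R$-free, gives $H_2(W,\partial W;R)\cong H^2(W;R)=0$ and $H_1(W,\partial W;R)\cong H^3(W;R)=0$, so the long exact sequence of the pair forces the inclusion-induced map $H_1(M_K;R)\to H_1(W;R)$ to be an isomorphism, while $\mu$ generates $H_1(M_K;R)$. I would then form $\mathcal B$ by attaching a $0$-framed $2$-handle to $M_K=\partial W$ along $\mu$. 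As in the integral case, $\partial\mathcal B=S^3$: turning the trace $B^4\cup_K(0\text{-framed }2\text{-handle})$ of the $0$-surgery on $K$ upside down realizes it as $(M_K\times I)\cup_\mu(0\text{-framed }2\text{-handle})\cup(4\text{-handle})$, so attaching the $2$-handle to $M_K$ creates an $S^3$ boundary component, and the cocore of that $2$-handle is a properly embedded disk in $\mathcal B$ bounded by a copy of $K\subset S^3$. Since the handle is attached along a generator of $H_1(W;R)$ and $H_2(W;R)=H_3(W;R)=0$, one checks $\mathcal B$ is an $R$-homology $4$-ball, so $K$ is $R$-slice — and, crucially, no $\pi_1$-hypothesis and no appeal to Freedman is needed, exactly because (c) asks only for an $R$-homology ball.

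Finally $c\Rightarrow d$ is immediate: deleting an open $4$-ball centered at an interior point of the slicing disk leaves an $R$-homology $S^3\times[0,1]$ in which the punctured disk is an annular $R$-concordance from $K$ to $U$. I expect the one place requiring genuine care to be the homology computation for $\mathcal B$ in $b\Rightarrow c$, whose validity hinges on $\mu$ representing a true generator of $H_1(W;R)$; note that although Definition~\ref{def:relmer} only controls meridians up to a positive unit of $R$, a positive-unit multiple of a generator is still a generator, so the slack built into the definition creates no obstruction.
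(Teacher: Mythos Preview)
Your proof is correct and follows essentially the same route as the paper, which simply remarks that the argument is ``essentially identical'' to that of Proposition~\ref{prop:basichomcobordism}. You have correctly identified the one genuine simplification in the $R$-setting --- Freedman's theorem is unnecessary in $b\Rightarrow c$ since $R$-slice only requires an $R$-homology ball rather than an actual $B^4$ --- and your added care in verifying that $\mu$ generates $H_1(W;R)$ is appropriate.
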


The proof of Proposition~\ref{prop:Rbasichomcobordism} is essentially identical to that of Proposition~\ref{prop:basichomcobordism}. Proposition~\ref{prop:Rbasichomcobordism} suggests that invariants that obstruct knots from being rationally concordant might be dependent only on the rational homology cobordism class of the zero-framed surgery. Further evidence for this is  provided by the following observation,  which follows immediately from $d \Rightarrow c$ of Proposition~\ref{prop:Rbasichomcobordism} and Ozsv\'{a}th-Szab\'{o}~\cite[Theorem 1.1]{OzSz2}.

\begin{corollary}\label{cor:tauratU} For any $R$, if $M_K$ is smoothly $R$-homology cobordant to $M_U$, then $\tau(K)=0$.
\end{corollary}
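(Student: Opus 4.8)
The plan is to convert the homological hypothesis into the existence of a smooth slice disk for $K$ in a rational homology ball, and then invoke the known genus bound for $\tau$. First I would reduce to $R=\mathbb{Q}$: tensoring an $R$-homology cobordism with $\mathbb{Q}$ over $R$ gives a $\mathbb{Q}$-homology cobordism, so we may assume $M_K$ is smoothly $\mathbb{Q}$-homology cobordant to $M_U$. Both $H_1(M_K;\mathbb{Q})$ and $H_1(M_U;\mathbb{Q})$ are copies of $\mathbb{Q}$ generated by the positive meridian, and both are carried isomorphically onto $H_1$ of the cobordism, so the two meridians agree there up to a nonzero rational scalar; precomposing the cobordism, if necessary, with the orientation-preserving self-diffeomorphism of $M_U\cong S^1\times S^2$ that reverses the meridian class, we may take this scalar positive. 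Thus hypothesis (a) of Proposition~\ref{prop:Rbasichomcobordism} holds with $R=\mathbb{Q}$, and so by that proposition $K$ is smoothly $\mathbb{Q}$-slice: $K$ bounds a smoothly and properly embedded $2$-disk $\Delta$ in a smooth compact $4$-manifold $\mathcal{B}$ with $\partial\mathcal{B}=S^3$ and $H_*(\mathcal{B};\mathbb{Q})\cong H_*(B^4;\mathbb{Q})$.

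Second, I would feed $\Delta$ into the $\tau$-genus inequality. Ozsv\'{a}th and Szab\'{o}'s bound $|\tau(K)|\le g$ for a smoothly embedded genus-$g$ surface in $B^4$ bounded by $K$ (\cite[Theorem 1.1]{OzSz2}) holds equally well with $B^4$ replaced by any smooth rational homology $4$-ball with boundary $S^3$: the Heegaard Floer arguments that prove it---comparing the $d$-invariants of large surgeries on $K$ across a definite cobordism obtained by attaching a $2$-handle to the ambient manifold along $K$, into which the capped-off surface descends generating $H_2$---use nothing about that ambient $4$-manifold beyond its rational homology type. Applying this with $\Delta$ of genus $0$ gives $|\tau(K)|\le 0$, i.e.\ $\tau(K)=0$.

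The only step with genuine content is this extension of the four-ball genus bound from $B^4$ to rational homology balls; everything else is formal manipulation of Proposition~\ref{prop:Rbasichomcobordism}, so I expect that extension to be the one (mild) obstacle. It is implicit in the proof of \cite[Theorem 1.1]{OzSz2}, and the analogous fact for the $s$-invariant is already standard in the rational concordance literature, so in writing the argument I would either cite a clean reference for the rational-homology-ball version of the $\tau$ bound or record the short handle-attachment argument indicated above. One should avoid, in the case $R=\mathbb{Z}$, the tempting shortcut of replacing $\mathcal{B}$ by $B^4$ using Freedman's theorem \cite{Fr1}---that is purely topological, whereas $\tau$ detects the smooth structure, so for general $R$ the generalized smooth inequality is genuinely what is needed.
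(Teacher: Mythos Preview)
Your approach is correct and matches the paper's one-line proof: Proposition~\ref{prop:Rbasichomcobordism} shows $K$ is smoothly $\mathbb{Q}$-slice, and \cite[Theorem~1.1]{OzSz2} then gives $\tau(K)=0$. Two small corrections to your surrounding commentary: the extension of the $\tau$-bound to rational homology balls that you flag as the one obstacle is not actually needed, since \cite[Theorem~1.1]{OzSz2} is already stated for any smooth $W$ with $\partial W=S^3$ and $b_1(W)=b_2^+(W)=0$, which a rational homology ball satisfies; and your aside that the analogous vanishing for $s$ is ``already standard in the rational concordance literature'' is incorrect---whether $s$ vanishes on $\mathbb{Q}$-slice knots is an open question, as the paper itself remarks at the end of Section~\ref{sec:Qinvts}.
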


This suggests the following ``rational version'' of Question~\ref{question:Kirby}, which is the question of whether or not  $a \Rightarrow d$ of Proposition~\ref{prop:Rbasichomcobordism} holds for any two knots (when $R=\Q$).

\begin{question}\label{question:KirbyQ} If $M_{K_0}$ and $M_{K_1}$ are CAT $\Q$-homology cobordant rel meridians,  then are $K_0$ and $K_1$ CAT $\Q$-concordant?
\end{question}

\noindent Of course by Proposition~\ref{prop:Rbasichomcobordism} this question has a positive answer if one of the knots is the unknot.

We show that Question~\ref{question:KirbyQ} has a \emph{negative} answer in the smooth category by showing that the examples from Theorem~\ref{thm:smooth} are not even smoothly $\Q$-concordant, because the $\tau$-invariant is an invariant of smooth $\Q$-concordance (see Proposition~\ref{prop:tau1}). We go on to show that Question~\ref{question:KirbyQ} has a \emph{negative} answer even in the topological category.  To accomplish this we first prove in Section~\ref{sec:mainresult} :

\newtheorem*{cor:cable}{Corollary~\ref{cor:cable}}
\begin{cor:cable} For any knot $K$ and any positive integer $p$, zero-framed surgery on $K$ is smoothly $\Z\left[\frac{1}{p}\right]$-homology cobordant rel meridians to zero-framed surgery on the $(p,1)$-cable of $K$.
\end{cor:cable}

\noindent Then in Section~\ref{sec:Qinvts} we observe that there are elementary classical invariants that obstruct a knot's being topologically rationally concordant to its $(p,1)$-cable. Even among topologically slice knots, the $\tau$ invariant can be used to obstruct $K$  being smoothly rationally concordant to its $(p,1)$-cable. 

Thus in summary we show:
\newtheorem*{thm:mainapplication}{Theorem~\ref{thm:mainapplication}}
\begin{thm:mainapplication} The answer to Question~\ref{question:KirbyQ} is ``No,'' in both the smooth and topological categories. In the smooth category there exist counterexamples that are topologically slice.
\end{thm:mainapplication}

We remark in passing that the analogues of Proposition~\ref{prop:basichomcobordism} and Corollary~\ref{cor:truTOP}  hold for links ~\cite[Theorem 2, p.19]{Hi} and have been the basis of recent attempts to resolve the smooth $4$-dimensional Poincar\'{e} Conjecture ~\cite{FGMW}.  However,  there are examples of links with whose  zero surgeries are \emph{diffeomorphic}  but whose concordance classes are distinguished by elementary invariants such as higher-order linking numbers ~\cite[Figure 4.7]{C3}. Even more simply, consider the link, $L_m$,  whose first component is an $m$-twist knot and whose second component is a trivial circle linking the $m$-twisted band. The zero-framed surgery on $L_m$ is then independent of $m$ but the concordance type of the first component is dependent on $m$.  See  ~\cite[Fig.1]{CO1a}~\cite{CO1} for different examples.

\bigskip
\noindent{\bf{Acknowledgment:}} Thanks to Paul Kirk, Chuck Livingston, and Danny Ruberman for their interest and some useful suggestions in Section~\ref{sec:Zhomologycobordism}.

\section{Homology cobordism and satellite knots}\label{sec:mainresult}

We recall the notation for a satellite construction ~\cite[p. 10]{Lick2}. Suppose $P$ is an oriented knot  in the  solid torus $ST$, called a \textit{pattern knot}. An example is shown in Figure~\ref{fig:patterns1}~(a). For any knot type $K$ in $S^3$ we denote by $P(K)$ the satellite of  $K$ obtained by using $P$ as a pattern.  The \textit{winding number} of $P$ is the algebraic intersection number of $P$ with a meridional disk of $ST$. Let $\widetilde{P}=P(U)$ be the knot in $S^3$ obtained by viewing ST as the standard unknotted solid torus in $S^3$. For example, for the $P$ shown in Figure~\ref{fig:patterns1}~(a), $\widetilde{P}$ is the trivial knot. 

\begin{theorem}\label{thm:mainsatelliteresult}  Suppose $P$ is a pattern with positive winding number $p$ such that $\widetilde{P}$ is CAT $\mathbb{Z}\left[\frac{1}{p}\right]$ slice. Then, for any knot $K$,  the zero-framed surgery on $K$ is CAT $\Z\left[\frac{1}{p}\right]$-homology cobordant rel meridians to zero-framed surgery on the satellite knot $P(K)$.
\end{theorem}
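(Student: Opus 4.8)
The plan is to describe $M_{P(K)}$ as the result of cutting the exterior of $K$ out of a $\Z[1/p]$-homology solid torus built from a slice disk for $\widetilde P$, and regluing. Write $ST$ for the solid torus containing $P$, $T=\partial ST$, and $X_K=S^3\setminus\nu(K)$, and recall that $P(K)$ is formed by gluing $ST$ to $X_K$ along a homeomorphism $T\to\partial X_K$ carrying the meridian $m$ of $ST$ to $\mu_K$ and the longitude $\ell$ of $ST$ to the $0$-framed longitude $\lambda_K$. Consequently $M_{P(J)}=X_J\cup_T N$ for every knot $J$, where $N$ is a fixed $3$-manifold with $\partial N=T$ depending only on $P$ (it is $ST\setminus\nu(P)$ with $\partial\nu(P)$ filled along the appropriate slope). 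Let $Y$ be the solid torus $S^3\setminus\nu(U)$, regarded as filling $T$ via the above gluing, so that $\ell$ bounds a disk in $Y$; then $M_K=X_K\cup_T Y$, because $Y$ is exactly the $0$-surgery solid torus, and $M_{\widetilde P}=M_{P(U)}=Y\cup_T N$. Since $\widetilde P$ is CAT $\Z[1/p]$-slice it bounds a CAT disk $\Delta$ in a CAT $\Z[1/p]$-homology $4$-ball $B$ with $\partial B=S^3$; set $E_\Delta:=B\setminus\nu(\Delta)$. A standard Mayer--Vietoris and Lefschetz-duality argument shows $E_\Delta$ is a CAT $\Z[1/p]$-homology $S^1\times D^2$ with $H_1(E_\Delta;\Z[1/p])$ generated by $\mu_{\widetilde P}$, and --- since $[\Delta]$ vanishes in $H_2(B,\partial B;\Z[1/p])$ --- that $\Delta$ induces the $0$-framing on $\widetilde P$, so $\partial E_\Delta=M_{\widetilde P}=Y\cup_T N$.

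Now I would form $V$ by gluing $X_K\times[0,1]$ onto $E_\Delta$ along $\partial X_K\times[0,1]$, identified level-wise via the satellite gluing with a bicollar $T\times[0,1]$ of the separating torus $T\subset\partial E_\Delta=Y\cup_T N$. Chasing the boundary: the two components of $\partial E_\Delta\setminus(T\times[0,1])$ are $Y$ and $N$, which get capped off by $X_K\times\{0\}$ and $X_K\times\{1\}$, so $\partial V=-(X_K\cup_T Y)\sqcup(X_K\cup_T N)=-M_K\sqcup M_{P(K)}$. This is the candidate cobordism.

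It then remains to verify that $V$ is a CAT $\Z[1/p]$-homology $M_K\times[0,1]$ rel meridians. I would run the Mayer--Vietoris sequence of $V=E_\Delta\cup_{T\times I}(X_K\times I)$ with $\Z[1/p]$ coefficients. The maps $H_*(T^2;\Z[1/p])\to H_*(E_\Delta;\Z[1/p])\oplus H_*(X_K;\Z[1/p])$ are controlled by: $[\ell]=0$ in both $E_\Delta$ (as $\ell$ bounds a disk in $Y\subset\partial E_\Delta$) and $X_K$ (as $\ell=\lambda_K$); while $[m]=p\,[\mu_{\widetilde P}]$ in $E_\Delta$ (as $m$ is a meridian of the companion $U$ and $P$ has winding number $p$ in $ST$) and $[m]=[\mu_K]$ in $X_K$. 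From this one computes $H_*(V;\Z[1/p])\cong\Z[1/p]$ in degrees $0,1,2,3$ --- and this is precisely where $p$ must be inverted, since over $\Z$ the image $(p,1)$ of $[m]$ is not primitive. One then checks that the inclusions of both ends are $\Z[1/p]$-homology isomorphisms by identifying generators: $[\mu_K]$ and $[\mu_P]=[\mu_{P(K)}]$ generate the respective first homology groups; the capped-off Seifert surfaces of $K$ and of $P(K)$ meet $T$ along $\ell$ and along $p\ell$ respectively, each a $\Z[1/p]$-generator of $H_2(V;\Z[1/p])$; and the fundamental classes both map to $[T]$. Finally, the relation $[\mu_K]=p\,[\mu_{P(K)}]$ already holds inside $M_{P(K)}$ by the winding number hypothesis, hence in $V$, so the positively-oriented meridians differ by the positive unit $p$ of $\Z[1/p]$, exactly as required by Definition~\ref{def:relmer}.

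The main obstacle I anticipate is bookkeeping rather than depth: one must pin down exactly which Dehn filling produces $N$ --- equivalently, control the effect of the satellite operation on framings --- so as to be sure that $X_K\cup_T N=M_{P(K)}$ and that $Y$ is the $0$-surgery solid torus, and then verify that the Mayer--Vietoris computation yields honest isomorphisms on inclusion of the two ends, not merely abstractly isomorphic homology groups. The existence and identification of $E_\Delta$ as a $\Z[1/p]$-homology solid torus is routine. When $p=1$ one further checks, via van Kampen, that $\pi_1(V)$ is normally generated by $\mu_K=m$ --- because then $m$ is conjugate to the meridian $\mu_P$, which normally generates $\pi_1(E_\Delta)$ --- yielding the sharper statement with the $\pi_1$-condition of Question~\ref{question:Kirby}, which is Corollary~\ref{cor:windingonesatelliteresult}.
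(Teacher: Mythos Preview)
Your proof is correct and follows essentially the same strategy as the paper's general proof: both build the cobordism from the knot-exterior data together with a $\Z[1/p]$-homology circle bounding $M_{\widetilde P}$, and both verify the homology via Mayer--Vietoris using the key relation $[m]\mapsto (p[\mu_{\widetilde P}],[\mu_K])$. The only difference is organizational: the paper first assembles a three-ended cobordism $E$ with $\partial E=M_K\sqcup M_{\widetilde P}\sqcup -M_{P(K)}$ (Lemma~\ref{lem:Efacts}) and then caps off the $M_{\widetilde P}$ end with an abstract $W$ from Proposition~\ref{prop:Rbasichomcobordism}, whereas you glue $X_K\times I$ directly to the slice-disk exterior $E_\Delta$ along a bicollar of $T\subset\partial E_\Delta$. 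After absorbing collars, the two $4$-manifolds coincide when one takes $W=E_\Delta$; your route simply avoids the intermediate $E$ at the cost of having to identify $T$ inside $\partial E_\Delta$ and check the framing of $\Delta$ (which the paper sidesteps by invoking Proposition~\ref{prop:Rbasichomcobordism}).
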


In the special case that $\widetilde{P}$ is unknotted and the winding number is $1$ we can set $p=1$ and get:

\begin{corollary}\label{cor:windingonesatelliteresult}  Suppose $P$ is a pattern with  winding number $1$ such that $\widetilde{P}$ is unknotted. Then, for any knot $K$,  the zero-framed surgery on $K$ is smoothly $\Z$-homology cobordant rel meridians to zero-framed surgery on the satellite knot $P(K)$.
\end{corollary}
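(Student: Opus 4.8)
The plan is to obtain the Corollary as the special case $p=1$ of Theorem~\ref{thm:mainsatelliteresult}: winding number $1$ forces $p=1$, so $\Z[1/p]=\Z$, and the unknot $\widetilde P$ is smoothly slice, hence smoothly $\Z$-slice. Thus it is enough to prove Theorem~\ref{thm:mainsatelliteresult}, of which the Corollary is the special case; what follows sketches that proof.

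First I would record the relevant satellite decompositions. Write $C_K=S^3\setminus\nu(K)$ for the knot exterior and $E=ST\setminus\nu(P)$ for the exterior of the pattern in the solid torus. Then $S^3$ containing $P(K)$ is $C_K\cup_\phi E$, where $\phi\colon\partial ST\to\partial C_K$ is the standard satellite gluing, sending the meridian of $ST$ to $\mu_K$ and the longitude of $ST$ to $\lambda_K$; the companion $U$ gives $S^3=ST'\cup_\phi E$ with $ST'\cong S^1\times D^2$ the complementary solid torus. Since the surgery curve of $P(K)$ lies in $E$, letting $N$ be $E$ with the appropriately framed solid torus glued along $\partial\nu(P)$ (so $N$ has boundary $\partial ST$), one gets compatible decompositions
\[
M_{P(K)}=C_K\cup_\phi N,\qquad M_K=C_K\cup_\phi ST',\qquad M_{\widetilde P}=ST'\cup_{\partial ST}N ,
\]
where in the middle expression $ST'$ is attached so that $\phi$ carries its meridian (the longitude of $ST$) to $\lambda_K$ — precisely the zero-framed surgery solid torus of $K$. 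The one fact to verify is that the zero-framing of $P(K)$, read as a framing of $P$ inside $ST$, is independent of $K$: it differs from that of $\widetilde P=P(U)$ by $p^2$ times the framing of the companion, which is $0$. Hence $N$ really is a common piece of $M_{P(K)}$ and of $M_{\widetilde P}$.

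Next I would construct the interpolating cobordism from a slice-disk exterior. Let $B$ be a CAT $\Z[1/p]$-homology $4$-ball with $\partial B=S^3$ and $D\subset B$ a properly embedded disk bounded by $\widetilde P$, and put $C=B\setminus\nu(D)$. A Mayer--Vietoris computation shows $C$ is a $\Z[1/p]$-homology $S^1\times D^2$ with $H_1(C;\Z[1/p])$ generated by the meridian of $\widetilde P$, and — since the slice-disk framing agrees with the Seifert framing — $\partial C=M_{\widetilde P}=ST'\cup_{\partial ST}N$. So, cut along the torus $\partial ST\subset\partial C$, the $4$-manifold $C$ is a $\Z[1/p]$-homology cobordism rel $\partial ST$ from $N$ to $ST'$. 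I would then form
\[
V:=\big(C_K\times[0,1]\big)\ \cup\ C ,
\]
gluing the side $\partial C_K\times[0,1]$ to a bicollar $\partial ST\times[-1,1]$ of that torus in $\partial C$ via $\phi$. By the decompositions above, $\partial V=M_{P(K)}\sqcup(-M_K)$; a Mayer--Vietoris argument with $\Z[1/p]$ coefficients, using $H_*(C_K)\cong H_*(S^1)$ and $H_*(C;\Z[1/p])\cong H_*(S^1)$, shows both boundary inclusions are $\Z[1/p]$-homology isomorphisms, so $V$ is a $\Z[1/p]$-homology cobordism. Finally, tracing meridians through the gluing: the meridian of $ST$ maps to $\mu_K$ on the $C_K$ side and to $p$ times the meridian of $\widetilde P$ — which is the meridian of $P(K)$ — on the $C$ side, so $[\mu_K]=p\,[\mu_{P(K)}]$ in $H_1(V;\Z[1/p])$ with $p$ a positive unit of $\Z[1/p]$; that is, $V$ is a homology cobordism rel meridians. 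For the Corollary, $p=1$ and the two meridians are literally identified.

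This uses no idea beyond the above construction; the work is homological and framing bookkeeping. The step I expect to be fussiest is verifying that the zero-surgery solid torus of $P(K)$, viewed inside $ST$, is genuinely $K$-independent — this is exactly where the winding number $p$ and the $p^2$-framing correction enter, and it explains why the right hypothesis is that $\widetilde P$ be $\Z[1/p]$-slice rather than $\Z$-slice (in the winding-number-one case the distinction evaporates). One must also keep the gluing maps and orientations consistent, so that the Mayer--Vietoris computation produces honest $\Z[1/p]$-homology equivalences $M_K\hookrightarrow V$ and $M_{P(K)}\hookrightarrow V$ and so that the meridian comparison comes out with a \emph{positive} unit.
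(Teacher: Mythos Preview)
Your proposal is correct and follows essentially the paper's general proof of Theorem~\ref{thm:mainsatelliteresult}: both build the cobordism by gluing a product on the companion knot exterior to a slice-disk exterior for $\widetilde P$ and verify the $\Z[1/p]$-homology cobordism (and the meridian relation $[\mu_K]=p[\mu_{P(K)}]$) by Mayer--Vietoris. The paper organizes the gluing slightly differently---first forming an auxiliary $4$-manifold $E$ with three boundary components $M_K\sqcup M_{\widetilde P}\sqcup -M_{P(K)}$ and then capping the $M_{\widetilde P}$ end with a $\Z[1/p]$-homology circle $W$---and also supplies a separate Kirby-calculus proof in the case $\widetilde P=U$, but after absorbing collars your $V$ and the paper's $E\cup W$ are the same construction.
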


As another special case take the pattern knot to be a standard circle in $ST$ with winding number $p$ (so that $\widetilde{P}$ is unknotted). Then $P(K)$ is called the $(p,1)$-cable of $K$. In this case we have:

\begin{corollary}\label{cor:cable} For any knot $K$ and any positive integer $p$, zero-framed surgery on $K$ is smoothly $\Z\left[\frac{1}{p}\right]$-homology cobordant rel meridians to zero-framed surgery on the $(p,1)$-cable of $K$.
\end{corollary}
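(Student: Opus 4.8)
\noindent\textbf{Proof proposal for Corollary~\ref{cor:cable}.} The plan is to realize the $(p,1)$-cable as a satellite with an especially simple pattern and then apply Theorem~\ref{thm:mainsatelliteresult} essentially verbatim. First I would take $P\subset ST$ to be the standard curve lying on a concentric torus inside $ST$ that runs $p$ times in the longitudinal direction and once in the meridional direction; by construction $P$ has winding number $p$, and for any knot $K$ the satellite $P(K)$ is exactly the $(p,1)$-cable of $K$. The only point to verify is that $\widetilde P = P(U)$ is the unknot: when $ST$ is embedded in $S^3$ as the standard unknotted solid torus, $\widetilde P$ is the $(p,1)$-torus knot $T(p,1)$, and this is trivial, since the single meridional wrap can be slid off the end of the solid torus.

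Next I would note that the unknot bounds a smoothly embedded $2$-disk in $B^4$, so it is smoothly --- hence CAT --- $\Z\left[\frac{1}{p}\right]$-slice for every positive integer $p$. Thus the hypotheses of Theorem~\ref{thm:mainsatelliteresult} are met by this $P$ with positive winding number $p$, and invoking that theorem in the smooth category gives that $M_K$ is smoothly $\Z\left[\frac{1}{p}\right]$-homology cobordant rel meridians to $M_{P(K)}$, which is the zero-framed surgery on the $(p,1)$-cable of $K$. This is precisely the statement of the corollary; the case $p=1$ is of course trivial, since then $\Z\left[\frac{1}{p}\right]=\Z$ and the $(1,1)$-cable is $K$ itself.

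There is no real obstacle here: the corollary is a direct specialization of Theorem~\ref{thm:mainsatelliteresult}, and its only content is the bookkeeping identification of the cabling pattern together with the classical fact that $T(p,1)$ is the unknot. All of the substantive work --- constructing the homology cobordism out of a slice disk for $\widetilde P$ and tracking the images of the positively-oriented meridians --- takes place in the proof of Theorem~\ref{thm:mainsatelliteresult} itself.
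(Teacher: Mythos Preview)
Your proposal is correct and matches the paper's own argument exactly: the paper introduces Corollary~\ref{cor:cable} precisely by taking the pattern $P$ to be the standard winding-number-$p$ circle in $ST$, observing that $\widetilde P$ is unknotted (hence smoothly $\Z[1/p]$-slice), and invoking Theorem~\ref{thm:mainsatelliteresult}. There is nothing to add.
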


Before giving a proof of Theorem~\ref{thm:mainsatelliteresult}, we offer an alternative ``Kirby calculus" proof in the case of greatest interest, namely when $\widetilde{P}$ is unknotted. 

\begin{proof}[Proof in case $\widetilde{P}=U$] We describe a cobordism, $V$, from $M_K$ to $M_{P(K)}$. Let $X$ be the $4$-manifold obtained by adding a one-handle to $M_K\times [0,1]$. Thus $\partial^+X$ is $M_K\# S^1\times S^2$. This $3$-manifold is shown (schematically) in Figure~\ref{fig:Tim1}  (a), where we use that adding a $0$-framed $2$-handle along the unknotted circle $\widetilde{P}$ also yields $M_K\# S^1\times S^2$. Now add a $0$-framed $2$-handle, $H$, along the circle shown in Figure~\ref{fig:Tim1}  (b), to arrive at the desired cobordism $V$. Since this $2$-handle equates the meridian of $K$ with $p$ times the meridian of $\widetilde{P}$, $V$ is a $\Z\left[\frac{1}{p}\right]$-homology cobordism between $M_K$ and $\partial^+V$.

\begin{figure}[!h]
	    \begin{center}
	    	\begin{picture}(200,84)(0,0)
	    		\subfigure[$S^1\times S^2\# M_K$]
			    {
			        \includegraphics{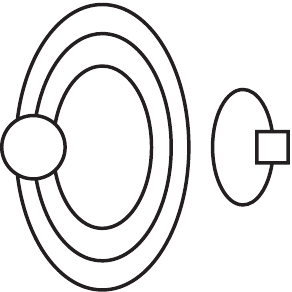}
					\put(-79,39){$P$}
					\put(-9,38){\small{K}}
					\put(-85,10){$0$}
					\put(-12,15){$0$}
					
			    }
				\hspace{.5cm}
			    \subfigure[$\partial ^+V$]
			    {
			        \includegraphics{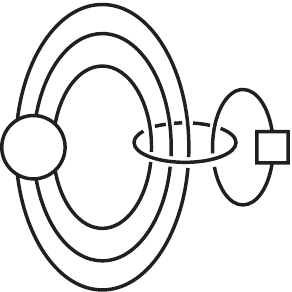}
					\put(-79,39){$P$}
					\put(-85,10){$0$}
					\put(-9,38){\small{K}}
					\put(-12,15){$0$}
					\put(-55,39){$0$}
			    }

	    	\end{picture}
		    \caption{}
			\label{fig:Tim1}
	    \end{center}
	\end{figure}
	
	\begin{figure}[!h]
	    \begin{center}
	    	\begin{picture}(200,84)(0,0)
				\subfigure[$\partial^+V$]
				{
					\includegraphics{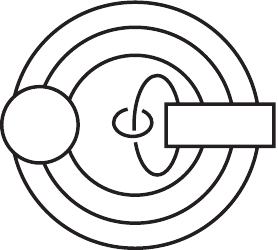}
					\put(-73,34){$P$}
					\put(-22,33){\small{K}}
					\put(-52,41){$0$}
					\put(-47,23){$0$}
					\put(-81,10){$0$}
				}
				\hspace{.5cm}
				\subfigure[$M_{P(K)}$]
				{
					\includegraphics{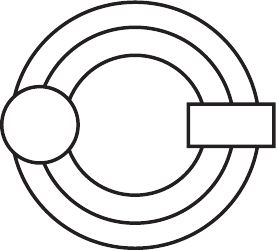}
					\put(-73,34){$P$}
					\put(-18,33){\small{K}}
					\put(-81,10){$0$}
				}
	    	\end{picture}
		    \caption{}
			\label{fig:Tim2}
	    \end{center}
	\end{figure}
\noindent To see that $\partial^+V\cong M_{P(K)}$, start from the picture of $\partial^+V$ in Figure~\ref{fig:Tim1} (b). Slide all strands of $\widetilde{P}$ that pass through the attaching circle of $H$ over the right-most $2$-handle, arriving at Figure~\ref{fig:Tim2} (a). Now the small linking circle in Figure~\ref{fig:Tim2} (a) can be ``cancelled'' with its linking partner resulting in the picture of $M_{P(K)}$ shown in Figure~\ref{fig:Tim2} (b). In the first homology of $V$, the meridian of $K$ corresponds to $p$ times the meridian of $P(K)$, so $V$ is a
 $\Z\left[\frac{1}{p}\right]$-homology cobordism rel meridians.
\end{proof}

\begin{proof}[General Proof of Theorem~\ref{thm:mainsatelliteresult}]  Let $\alpha$ denote a meridian of ST so that $\ell k(\alpha,\widetilde{P})=p$.  Note that $S^3\setminus \nu(\alpha)$  can be identified with $ST$. Since $P(K)$ is a satellite knot with  pattern $P$ and companion $K$, the exterior of $P(K)$ decomposes into two pieces, $S^3\setminus \nu( K)$ and  $ST\setminus P$, and the latter may be identified with $S^3\setminus \nu(\widetilde{P})\setminus \nu(\alpha)$.  Thus the $0$-framed Dehn filling, $M_{P(K)}$, decomposes into $S^3\setminus \nu( K)$ and $M_{\widetilde{P}}\setminus \nu(\alpha)$, identifying  the boundary of $\nu(\alpha)$ with the boundary of $\nu(K)$. Additionally recall that $M_K$ decomposes into $S^3\setminus \nu(K)$ and a ``surgery'' solid torus.

With these facts in mind, we now construct  a smooth $4$-manifold $E$ whose boundary is the disjoint union $M_K \sqcup M_{\widetilde{P}} \sqcup -M_{P(K)}$. Begin with the disjoint union of $M_K  \times [0, 1]$ and $M_{\widetilde{P}} \times [0,1]$. Then identify the solid torus $\nu(\alpha) \times \{1\}$ in $M_{\widetilde{P}} \times \{1\}$ with the surgery solid torus  in  $M_K  \times \{1\}$. Do this in such a way that (a parallel push-off of) $\alpha$ is identified to a meridian of $K$. Then the third boundary component of $E$ is
$$
\left(M_{\widetilde{P}}\setminus \nu(\alpha)\right)\cup S^3\setminus \nu(K)\equiv M_{P(K)},
$$
using the first paragraph. Hence the boundary of $E$ is as claimed. Furthermore, note that under the inclusion maps on first homology,
\begin{equation}\label{eq:h1facts}
[\mu_K\times \{0\}]=[\mu_K\times \{1\}]= [\alpha\times \{1\}]= [\alpha\times \{0\}]= p\,[\mu_{\widetilde{P}}], ~~\text{and}
\end{equation}
\begin{equation}\label{eq:moreh1facts}
[\mu_{\widetilde{P}}]= [\mu_{P(K)}].
\end{equation}
We may analyze the homology of $E$ by the Mayer-Vietoris sequence with $\Z$ coefficients below:
$$
0\to H_2(M_{\widetilde{P}})\oplus H_2(M_K)\to H_2(E)\to H_1(\nu(\alpha))\overset{\psi}{\rightarrow} H_1(M_{\widetilde{P}})\oplus H_1(M_K)\to
$$
$$
\hspace{108mm}H_1(E)\to 0
$$
By ~\eqref{eq:h1facts}, $\psi([\alpha])=(p[\mu_{\widetilde{P}}],[\mu_K])$. This fact, together with  ~\eqref{eq:moreh1facts}, yield the first two claims of the following lemma, which may be compared with \cite[Lemma 2.5]{CHL3}. Since  $\psi$ is injective and  $H_2(\nu(\alpha))=0$, the third claim of the lemma follows.

\begin{lemma}\label{lem:Efacts}
The inclusion maps induce the following
\begin{enumerate}	
	\item an isomorphism $H_1\left(M_{\widetilde{P}};\mathbb{Z}\right)\cong H_1\left(E;\mathbb{Z}\right)=~\langle[\mu_{\widetilde{P}}]\rangle=~\langle[\mu_{P(K)}]\rangle$;
\item an isomorphism $H_1\left(M_{K};\mathbb{Z}\left[\frac{1}{p}\right]\right)\rightarrow H_1\left(E;\mathbb{Z}\left[\frac{1}{p}\right]\right)$;
	\item an isomorphism $H_2(E;\mathbb{Z}) \cong H_2\left(M_{\widetilde{P}};\mathbb{Z}\right)\oplus H_2(M_{K};\mathbb{Z})$
		\end{enumerate}
	\end{lemma}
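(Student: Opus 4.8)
The plan is to read off all three statements directly from the Mayer--Vietoris sequence displayed just above, using the computation $\psi([\alpha]) = (p[\mu_{\widetilde P}], [\mu_K])$ together with the known homology of a zero-surgery manifold: $H_0(M_J;\Z) \cong \Z$, $H_1(M_J;\Z) \cong \Z$ generated by the meridian $\mu_J$, $H_2(M_J;\Z)\cong \Z$, and $H_3(M_J;\Z)\cong \Z$ for any knot $J$. Also $H_1(\nu(\alpha))\cong \Z$ generated by $[\alpha]$ and $H_2(\nu(\alpha))=0$, since $\nu(\alpha)$ is a solid torus.

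First I would establish claim (1). The relevant tail of the sequence is
$$
H_1(\nu(\alpha)) \overset{\psi}{\longrightarrow} H_1(M_{\widetilde P})\oplus H_1(M_K) \longrightarrow H_1(E) \longrightarrow 0 .
$$
Since every group here is finitely generated abelian and $H_1(M_{\widetilde P})\oplus H_1(M_K)\cong \Z^2$ with basis $[\mu_{\widetilde P}], [\mu_K]$, while $\psi([\alpha]) = (p[\mu_{\widetilde P}], [\mu_K])$, the cokernel of $\psi$ is $\Z^2 / \langle (p,1)\rangle$. The change of basis $(a,b)\mapsto (a, b)$ with new generators $(0,1)$ removed... more concretely, $(p,1)$ is part of a basis of $\Z^2$ (its entries are coprime), so the quotient is free of rank one, generated by the image of $(1,0)$, i.e. by $[\mu_{\widetilde P}]$. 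By~\eqref{eq:moreh1facts} this image also equals $[\mu_{P(K)}]$, and since $[\mu_K] = p[\mu_{\widetilde P}]$ in $H_1(E)$ by~\eqref{eq:h1facts}, the composite $H_1(M_{\widetilde P};\Z)\to H_1(E;\Z)$ sends the generator $[\mu_{\widetilde P}]$ to the generator, hence is an isomorphism. This gives claim (1).

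Next, claim (2): tensoring the exact sequence with $\Z[\frac1p]$ (which is flat, so exactness is preserved) changes the situation because now $p$ is a unit. The map $\psi\otimes\Z[\tfrac1p]$ sends $[\alpha]$ to $(p[\mu_{\widetilde P}],[\mu_K])$; composing with the automorphism of $H_1(M_{\widetilde P};\Z[\tfrac1p])\oplus H_1(M_K;\Z[\tfrac1p])$ that scales the first factor by $p^{-1}$, we see $\psi$ is, up to automorphism, the ``diagonal'' $[\alpha]\mapsto ([\mu_{\widetilde P}],[\mu_K])$. Hence its cokernel is isomorphic to $H_1(M_K;\Z[\tfrac1p])$ via projection to the second factor, and this projection is exactly the map induced by the inclusion $M_K\hookrightarrow E$. (One should double-check that the inclusion-induced map is indeed this projection; it is, because the Mayer--Vietoris connecting maps are built from the two inclusions $M_{\widetilde P}\hookrightarrow E$, $M_K\hookrightarrow E$, and the quotient $H_1(E;\Z[\tfrac1p])$ is computed from the cokernel of $\psi$ precisely via those inclusions.) This establishes claim (2).

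Finally, claim (3): since $\psi$ is injective (its matrix $(p,1)^T$ has trivial kernel over $\Z$) and $H_2(\nu(\alpha)) = H_2(\text{solid torus}) = 0$, the segment
$$
0 \to H_2(M_{\widetilde P})\oplus H_2(M_K) \to H_2(E) \to H_1(\nu(\alpha)) \overset{\psi}{\longrightarrow} \cdots
$$
forces $H_2(M_{\widetilde P};\Z)\oplus H_2(M_K;\Z)\to H_2(E;\Z)$ to be an isomorphism (surjective because the next map $H_2(E)\to H_1(\nu(\alpha))$ has image $\ker\psi = 0$, injective because $H_2(\nu(\alpha))=0$ kills the term that would precede it). This is exactly claim (3). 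The only genuinely delicate point—and the one I would be most careful about—is verifying that the inclusion-induced maps are literally the coordinate projections/inclusions appearing in the Mayer--Vietoris cokernel description (rather than merely agreeing up to an unspecified automorphism), since the statement of the lemma asserts that the \emph{inclusion maps} induce these isomorphisms; this follows from naturality of Mayer--Vietoris but deserves an explicit sentence. Everything else is routine diagram-chasing over $\Z$ and over $\Z[\tfrac1p]$.
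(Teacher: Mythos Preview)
Your proof is correct and follows essentially the same approach as the paper: the authors simply display the Mayer--Vietoris sequence, record that $\psi([\alpha])=(p[\mu_{\widetilde P}],[\mu_K])$, and then assert that this together with~\eqref{eq:moreh1facts} gives claims (1) and (2), while injectivity of $\psi$ and $H_2(\nu(\alpha))=0$ give claim (3). You have spelled out in detail exactly the computations the paper leaves implicit; your only addition is the explicit cokernel calculation and the careful remark about inclusion maps versus abstract isomorphisms, which the paper does not address.
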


Finally, since $\widetilde{P}$ is CAT $\Z[1/p]$-slice, by Proposition~\ref{prop:Rbasichomcobordism}, $M_{\widetilde{P}}=\partial W$ where $W$ is a CAT $\Z[1/p]$-homology $S^1$  whose first homology is generated by $\mu_{\widetilde{P}}$. Let $V$ be the CAT $4$-manifold obtained by attaching $W$ to $E$ along $M_{\widetilde{P}}$.  Note $V$ has boundary $M_K \sqcup -M_{P(K)}$.

We claim that $V$ is the desired CAT $\Z[1/p]$-homology cobordism	rel meridians between $M_K$ and $M_{P(K)}$. The Mayer-Vietoris sequence with  $\Z[1/p]$-coefficients for $V=E\cup W$ gives
$$
H_2(V;\Z[1/p]) \cong \frac{H_2(E;\Z[1/p])}{H_2(M_{\widetilde{P}};\Z[1/p])}\cong H_2(M_K;\Z[1/p]),
$$
where we use ($3$) of Lemma~\ref{lem:Efacts} for the last equivalence; and
$$
H_1(V;\Z[1/p]) \cong H_1(E;\Z[1/p])\cong H_1(M_K;\Z[1/p])=~<[\mu_{P(K)}]>,
$$
where we have use ($1$) and ($2$) of Lemma~\ref{lem:Efacts} for the last two equivalences.
Combining these facts, we conclude that
$$
H_2\left(V,M_K;\mathbb{Z}\left[1/p\right]\right)\cong 0\cong H_1\left(V,M_K;\mathbb{Z}\left[1/p\right]\right).
$$
Moreover,
$$
H_3\left(V,M_K;\mathbb{Z}\left[1/p\right]\right)\cong H^1(V,M_{P(K)};\mathbb{Z}\left[1/p\right])\cong 0,
$$
since $H_1\left(V,M_{P(K)};\mathbb{Z}\left[1/p\right]\right)\cong 0$. It the follows from Poincar\'{e} duality that $W$ is a $\mathbb{Z}[1/p]$-homology cobordism between $M_K$ and $M_{P(K)}$. Moreover, by ~\ref{eq:h1facts} and ~\ref{eq:moreh1facts}, it is rel meridians since $p$ is a positive unit in $\Z[1/p]$.
\end{proof}

\section{Non-concordant knots whose zero surgeries are $\mathbb{Z}$-homology cobordant}\label{sec:Zhomologycobordism}

\begin{theorem} \label{thm:smooth} There exist topologically slice knots whose  zero surgeries are smoothly $\Z$-homology cobordant rel meridians, but which are not smoothly concordant.
\end{theorem}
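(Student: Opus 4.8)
The plan is to apply Corollary~\ref{cor:windingonesatelliteresult} to a carefully chosen pattern and companion. That corollary already guarantees, for \emph{any} winding-number-one pattern $P$ with $\widetilde{P}$ unknotted and \emph{any} knot $K$, that $M_K$ and $M_{P(K)}$ are smoothly $\Z$-homology cobordant rel meridians. So Theorem~\ref{thm:smooth} reduces entirely to finding one such pair $(P,K)$ for which $K$ and $P(K)$ are both topologically slice but \emph{not} smoothly concordant. I would establish non-concordance by showing $\tau(K)\neq\tau(P(K))$ and invoking the fact that $\tau$ is an invariant of smooth concordance \cite{OzSz2}; the same example should satisfy $s(K)\neq s(P(K))$ as well, which is what Corollary~\ref{cor:difftau} will require.

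Topological sliceness is the routine half. By the satellite formula for the Alexander polynomial, for a pattern of winding number $w$ one has $\Delta_{P(K)}(t)=\Delta_{\widetilde{P}}(t)\,\Delta_K(t^w)$, so when $w=1$ and $\widetilde{P}$ is unknotted this becomes
\[
\Delta_{P(K)}(t)=\Delta_{\widetilde{P}}(t)\,\Delta_K(t)=\Delta_K(t).
\]
Hence, choosing the companion $K$ with $\Delta_K(t)=1$ forces $\Delta_{P(K)}(t)=1$ as well, and Freedman's theorem \cite{Fr1} then makes both $K$ and $P(K)$ topologically slice. This costs nothing: Alexander-polynomial-one knots are plentiful, and among them one can find knots of either sign of $\tau$ --- for instance untwisted Whitehead doubles of knots with nonzero $\tau$, whose $\tau$-invariants are known.

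The heart of the argument, and the step I expect to be the main obstacle, is to produce an explicit winding-number-one pattern $P$ with $\widetilde{P}=U$ and an explicit topologically slice companion $K$ (necessarily with $\Delta_K=1$) for which $\tau(P(K))\neq\tau(K)$. The guiding point is that a winding-number-one satellite operator, unlike connected sum, need not respect $\tau$ at all, so a suitably ``double-like'' pattern should be able to move it. Concretely, I would build $P$ so that $P(K)$ admits a genus-one Seifert surface one of whose bands carries a copy of the companion $K$ --- a winding-number-one cousin of the Whitehead-double pattern --- with framings chosen so that untying that band (taking $K=U$) produces a genus-one Seifert surface for the unknot, i.e.\ so that $\widetilde{P}=U$. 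For knots bounding such a surface $\tau\in\{-1,0,1\}$, and --- following the techniques Ozsv\'ath--Szab\'o and Hedden developed to compute $\tau$ of doubled and satellite knots from the knot Floer complex --- its value is controlled by the sign of the clasp in that band together with the sign of $\tau(K)$: one sign of the clasp should give $\tau(P(K))=1$ whenever $\tau(K)>0$ (no matter how large $\tau(K)$ is), the other should give $\tau(P(K))=0$ whenever $\tau(K)\ge 0$. Then choosing, say, $K=D_+(T_{2,3})$, the untwisted positive-clasped Whitehead double of the right-handed trefoil --- so $\Delta_K=1$ and $\tau(K)=1$ by Hedden's computation --- together with the version of $P$ realizing $\tau(P(K))=0$ yields $\tau(K)=1\neq 0=\tau(P(K))$. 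The genuine work is in identifying enough of $\widehat{CFK}(P(K))$ to justify this value of $\tau$ (and the corresponding value of $s$); everything else is bookkeeping.

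Granting such a pair $(P,K)$, the theorem follows at once: $K$ and $P(K)$ are topologically slice; by Corollary~\ref{cor:windingonesatelliteresult} their zero surgeries are smoothly $\Z$-homology cobordant rel meridians; and $\tau(K)\neq\tau(P(K))$, together with the smooth-concordance invariance of $\tau$, shows that $K$ and $P(K)$ are not smoothly concordant.
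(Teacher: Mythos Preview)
Your overall architecture matches the paper's: invoke Corollary~\ref{cor:windingonesatelliteresult} and then exhibit a winding-number-one pattern $P$ with $\widetilde P=U$ and a topologically slice companion $K$ for which $K$ and $P(K)$ are distinguished by a smooth invariant. The divergence is in how that distinction is made, and your specific mechanism has a geometric inconsistency. You want $P(K)$ to bound a genus-one Seifert surface ``one of whose bands carries a copy of the companion $K$'', so that $|\tau(P(K))|\le 1$ and a sign-of-clasp heuristic can pin down $\tau(P(K))=0$. But ``tie $K$ into a band of a genus-one surface'' is infection along a meridian of that band, and such a meridian has linking number $0$ with the boundary knot --- so that operation has winding number $0$, not $1$; it is the Whitehead-double picture itself, not a winding-one cousin of it. Conversely, for any winding-number-one pattern that is not the core and any nontrivial companion, Schubert's satellite genus formula (and the paper itself records $g(P(K))=g(K)+1$ for its pattern) gives $g(P(K))\ge g(K)+1$; with $K=D_+(T_{2,3})$ this forces $g(P(K))\ge 2$, so no genus-one Seifert surface exists and the bound $|\tau(P(K))|\le 1$ on which your heuristic rests is simply false.

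The paper avoids any direct Floer computation. It fixes one explicit pattern $P$ (Figure~\ref{fig:patterns1}) and exhibits a Legendrian realization of $P$ in the standard contact solid torus with $tb(P)=2$ and $rot(P)=0$. For any companion admitting a Legendrian representative with $tb(K)=2g(K)-1$ --- and the positive untwisted Whitehead double of the trefoil is such a knot, which handles the topologically slice case --- the Legendrian satellite construction then yields $tb(P(K))+|rot(P(K))|=2g(K)+1$, whence the slice--Bennequin inequality forces $g_4(P(K))\ge g(K)+1>g_4(K)$. Because $\tau$ and $s$ both refine slice--Bennequin, the same front-diagram count gives $\tau(P(K))>\tau(K)$ and $s(P(K))>s(K)$ simultaneously, without ever opening a Floer complex. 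Your higher-level target $\tau(P(K))\ne\tau(K)$ is therefore correct, but reaching it requires either this contact-geometric route or a genuine satellite Floer calculation for a specific winding-one pattern; the genus-one shortcut is unavailable.
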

\begin{proof}
Let $P$ be the pattern knot in the solid torus shown in Figure~\ref{fig:patterns1}~(a), and denote by $P(K)$ the satellite of a knot $K$ obtained by using $P$ as a pattern.  

\begin{figure}[!ht]
    \begin{center}
    		\subfigure[The pattern $P$]
		    {
		        \begin{picture}(160,190)(0,-30)
					\includegraphics[bb=0 0 200 175,scale=.8]{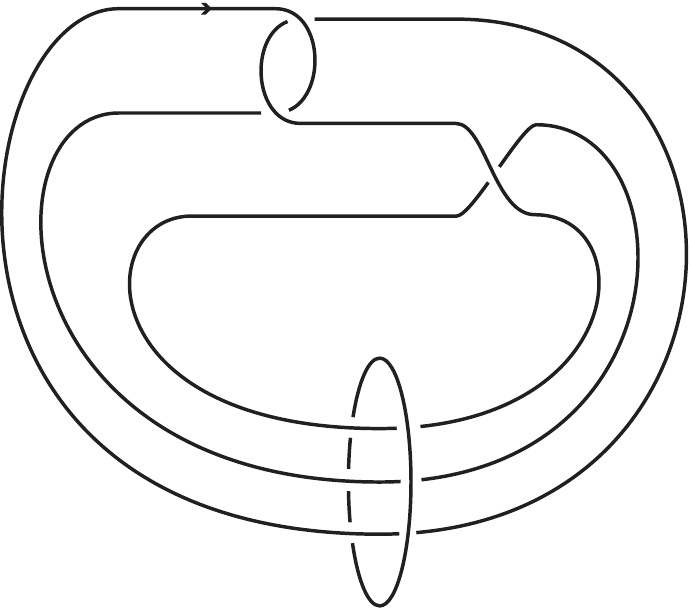}
			        \label{fig:patterninsolidtorus}
		        \end{picture}
		    }\hspace{.8cm}
		    \subfigure[A Legendrian front of $P$]
		    {
				\begin{picture}(150,190)(0,-40)
					\put(21,-20){writhe = $3$}
					\put(90,-20){$\#$ cusps = $2$}
					\put(15,-35){$\#\downarrow$ left = $0$}
					\put(85,-35){$\#\uparrow$ right = $0$}
			        \includegraphics[bb= 0 0 153 150]{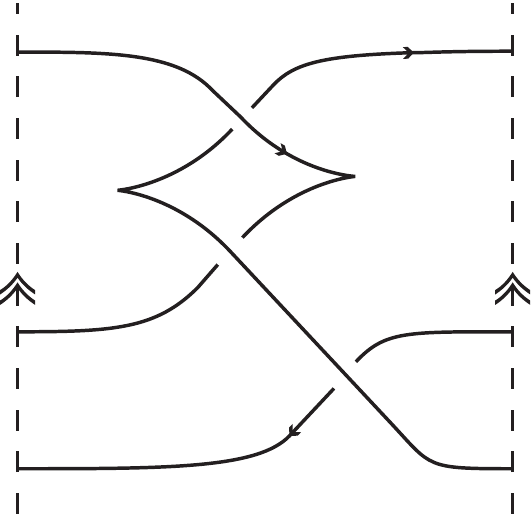}
			        \label{fig:legendrianpattern}
				\end{picture}
		    }
		\caption{}
	    \label{fig:patterns1}
    \end{center}
\end{figure}

Since the winding number of $P$ is $w=1$,  Corollary~\ref{cor:windingonesatelliteresult} shows that $M_K$ and $M_{P(K)}$ are smoothly $\Z$-homology cobordant rel meridians.   We will show that many $K$ are not smoothly concordant to $P(K)$.

An easy way to produce such $K$  is afforded by the slice-Bennequin inequality  \cite{Rudolph1997,Rudolph1995}~\cite[p.133]{EtnyreKnots}.  This inequality bounds the smooth 4-genus of a knot in terms of invariants of its Legendrian realizations in the standard contact structure on the 3-sphere.  Recall that a knot $K$ is {\em Legendrian} if it is tangent to the 2-planes of a contact structure (here we think of $K$ as a specific embedding).  Such knots have two homotopy theoretic invariants, the Thurston-Bennequin and rotation numbers, denoted $tb(K)$ and $rot(K)$ respectively.  Rather than reviewing these concepts here, we refer the reader to any number of excellent introductory sources  \cite{EtnyreKnots,GS, OzbagciStipsicz}. The slice-Bennequin inequality states
$$tb(K)+ |rot(K)|\le 2g_4(K)-1.$$
Our examples are produced as follows:  take any non-trivial knot which admits a Legendrian realization satisfying the {\em equality}:
\begin{equation} \label{eq:sharp} tb(K)=2g(K)-1,\end{equation}
where $g(K)$ denotes the 3-genus. Together with the slice-Bennequin inequality, this implies  $g_4(K)=g(K)$ .  Such knots are easy to find, e.g.\ any knot which can be represented as the closure of a positive braid.  Topologically slice examples also exist. For instance, the positive untwisted Whitehead double of any knot satisfying \eqref{eq:sharp} has a Legendrian realization satisfying \eqref{eq:sharp} \cite{Rudolph1995}\cite[Figure 9]{AR}.    We claim that for such knots one can produce a Legendrian realization of $P(K)$ satisfying $$tb(P(K))+|rot(P(K))|=2g(K)+1.$$  Granting this, the slice-Bennequin inequality implies $g_4(P(K))\ge g_4(K)+1$, thus proving the theorem.

To construct the desired Legendrian realization of $P(K)$, we first  produce a Legendrian realization of the pattern knot within the solid torus, satisfying
$$ tb(P)= 2 \ \ , \ \ rot(P)=0.$$
Figure~\ref{fig:patterns1}~(b) indicates how to do this. In the figure the dashed vertical lines are identified, thereby producing a projection of a Legendrian knot in a solid torus endowed with its standard contact structure, obtained as the quotient of $\R\times D^2\subset \R^3\subset S^3$, by translation.  Given a Legendrian pattern $P$ and a Legendrian companion $K$ one can employ the {\em Legendrian satellite construction} to produce a Legendrian satellite $P(K)$ (see \cite{NOT} or \cite[Appendix]{NgTraynor} for details).  One should  note that the framing used to identify the neighborhood of $K$ with $S^1\times D^2$ in this construction is the Legendrian framing, given by $tb(K)\in \Z$.

 Since $tb(K)=2g(K)-1>0$, we can repeatedly stabilize $K$  to achieve a new Legendrian knot satisfying $tb(K)=0$ and $rot(K)=2g(K)-1.$ Hence the framing used for the Legendrian satellite can be assumed to agree with the Seifert framing used in the standard satellite construction.  The claim is now proved by the following  formulas, which relate the invariants of the knots involved in the construction \cite[Remark 2.4]{Ng2001}:
$$ tb(P(K))=w^2 \cm tb(K) + tb(P),$$
$$ rot(P(K))= w \cm rot(K) + rot(P).$$
Here $w=1$, and we have   
$$tb(P(K))=tb(K)+tb(P)=0+2 \ \ , \ \ rot(P(K))= rot(K) + rot(P)= 2g(K)-1 + 0.$$
This proves the claim.
\end{proof}

Note that we described the Legendrian satellite construction used in the proof somewhat abstractly.  A concrete way to understand Legendrian knots in the standard contact structure on $S^3$ is through their  {\em front projections}.  Given front projections for $K$ and a pattern $P$  it is straightforward to produce a front projection for $P(K)$.  One can then explicitly compare the Thurston-Bennequin and rotation numbers using the formulas $$tb(K)=\text{writhe}(D)-\frac{1}{2}\# \text{cusps}(D),$$ $$rot(K)= \#\text{downward moving left-cusps(D)}-\#\text{upward moving right-cusps(D)}.$$
In the first, writhe denotes the signed number of crossings in the front diagram $D$ representing $K$. For the reader's convenience, we indicate how to do this for the trefoil  in Figure \ref{fig:patterns2}.

\begin{figure}[!ht]
    \begin{center}
    		\subfigure[A Legendrian right-handed trefoil...]
		    {
		        \begin{picture}(160,215)(0,-40)
					\put(21,-20){writhe = $3$}
					\put(90,-20){$\#$ cusps = $6$}
					\put(15,-35){$\#\downarrow$ left = $2$}
					\put(85,-35){$\#\uparrow$ right = $1$}
					\includegraphics[bb=0 0 212 217,scale=.8]{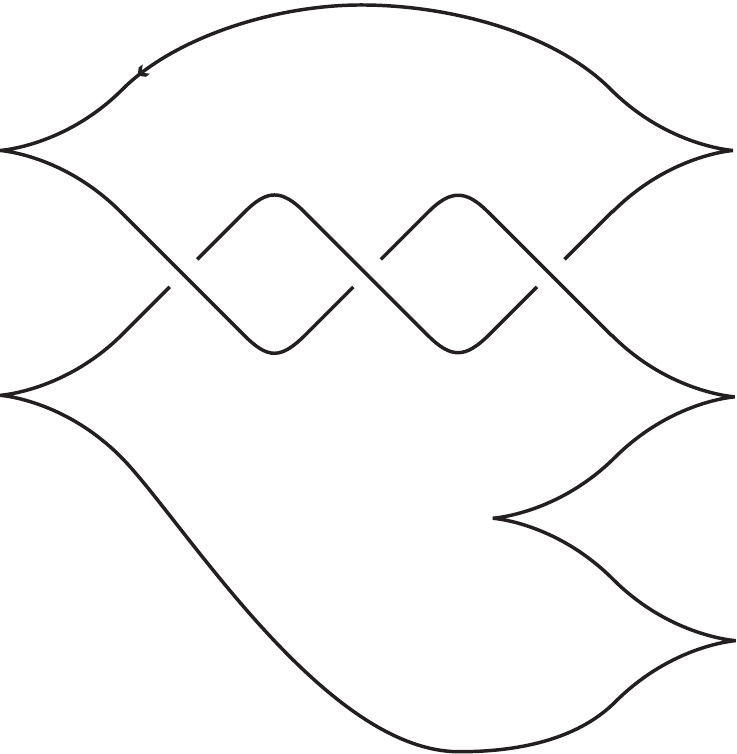}
					\put(0,90){$\to$}
			        \label{fig:legendrianRHT}
		        \end{picture}
		    }\hspace{.8cm}
		    \subfigure[...and its satellite with pattern $P$]
		    {
				\begin{picture}(160,215)(0,-40)
					\put(21,-20){writhe = $12$}
					\put(90,-20){$\#$ cusps = $20$}
					\put(15,-35){$\#\downarrow$ left = $5$}
					\put(85,-35){$\#\uparrow$ right = $4$}
			        \includegraphics[bb=0 0 212 217,scale=.8]{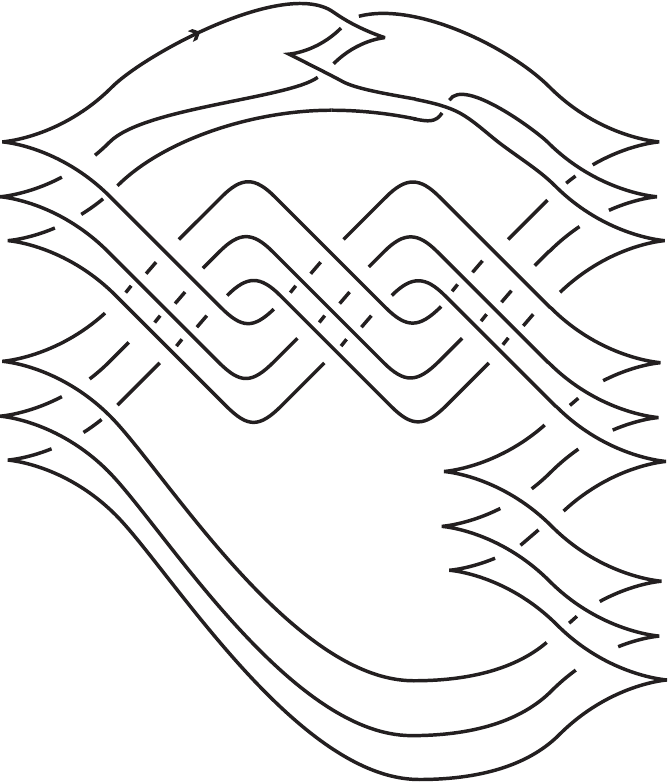}
			        \label{fig:legendriansatellite}
				\end{picture}
		    }
		\caption{}
	    \label{fig:patterns2}
    \end{center}
\end{figure}

The proof of Theorem \ref{thm:smooth} has an immediate corollary, which indicates that the Ozsv{\'a}th-Szab{\'o} and Rasmussen concordance invariants \cite{OzSz2,RasThesis,RasSlice}, $\tau(K)$ and $s(K)$, are not invariants of the smooth homology cobordism class of $M_K$.   

\begin{corollary}\label{cor:difftau} There exist topologically slice knots with homology cobordant (rel meridians) zero surgeries, but which have distinct $\tau$ and $s$ invariants.
\end{corollary}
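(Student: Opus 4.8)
The plan is to deduce Corollary~\ref{cor:difftau} directly from the proof of Theorem~\ref{thm:smooth}, rather than merely from its statement. Theorem~\ref{thm:smooth} already produces topologically slice knots $K$ (e.g.\ positive untwisted Whitehead doubles of knots satisfying \eqref{eq:sharp}) for which $M_K$ and $M_{P(K)}$ are smoothly $\Z$-homology cobordant rel meridians via Corollary~\ref{cor:windingonesatelliteresult}, and for which the slice--Bennequin argument shows $g_4(P(K)) \ge g_4(K)+1$. So the remaining task is simply to observe that the same Legendrian input that drives the genus estimate also pins down $\tau$ and $s$.

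First I would recall the slice--Bennequin-type bounds for $\tau$ and $s$: for any Legendrian representative $L$ of a knot $J$ one has $tb(L)+|rot(L)| \le 2\tau(J)-1$ (Plamenevskaya, Shumakovitch, via the Ozsv\'ath--Szab\'o contact invariant) and likewise $tb(L)+|rot(L)| \le s(J)-1$ (Shumakovitch, Plamenevskaya, Rasmussen). Applying these to the Legendrian realization of $P(K)$ constructed in the proof of Theorem~\ref{thm:smooth}, which satisfies $tb(P(K))+|rot(P(K))| = 2g(K)+1$, gives $\tau(P(K)) \ge g(K)+1$ and $s(P(K)) \ge 2g(K)+2$. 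On the other hand, the Legendrian realization of $K$ itself satisfies $tb(K)+|rot(K)| = 2g(K)-1$, and since $K$ is the slice disk side — more precisely, since the companion $K$ in the construction was taken so that $g_4(K)=g(K)$ — together with the general upper bounds $\tau(J) \le g_4(J)$ and $s(J) \le 2g_4(J)$ one gets $\tau(K) = g(K)$ and $s(K)=2g(K)$ exactly. Hence $\tau(P(K)) \ge \tau(K)+1$ and $s(P(K)) \ge s(K)+2$, so in particular $\tau$ and $s$ distinguish $K$ from $P(K)$ while their zero surgeries are $\Z$-homology cobordant rel meridians.

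The main obstacle, such as it is, is bookkeeping rather than mathematics: one must be slightly careful that the stabilizations performed on $K$ in the proof of Theorem~\ref{thm:smooth} (to arrange $tb(K)=0$, $rot(K)=2g(K)-1$, so that the Legendrian framing matches the Seifert framing) preserve the quantity $tb+|rot|$, which they do, since each stabilization drops $tb$ by one and changes $rot$ by $\pm 1$; one simply chooses the stabilization that increases $|rot|$. It then follows that the computed Legendrian satellite genuinely has $tb(P(K))+|rot(P(K))| = 2g(K)+1$ as asserted, and the $\tau$, $s$ inequalities apply verbatim. For the topologically slice examples one should also note that $\tau$ and $s$ are identically zero on topologically slice knots only in the sense that there is no obstruction coming from the topological category; here $K$ is a positive Whitehead double, which is topologically slice yet has $\tau(K)=g(K)>0$, so the phenomenon is genuinely smooth.

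Alternatively, and perhaps more cleanly for exposition, one can phrase the argument as: by Corollary~\ref{cor:tauratU} (or rather its manifestation in Theorem~\ref{thm:smooth}'s proof) the homology cobordism $M_K \sim M_{P(K)}$ coexists with $\tau(K) \ne \tau(P(K))$, $s(K) \ne s(P(K))$, which is precisely the assertion that neither $\tau$ nor $s$ factors through the smooth homology cobordism class of the zero surgery. Either way, the corollary is immediate once the two Legendrian realizations in the proof of Theorem~\ref{thm:smooth} are fed into the $\tau$- and $s$-versions of the slice--Bennequin inequality.
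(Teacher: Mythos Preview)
Your proposal is correct and follows essentially the same approach as the paper: you invoke the refined slice--Bennequin inequalities $tb+|rot|\le 2\tau-1$ and $tb+|rot|\le s-1$, apply them to the Legendrian realizations of $K$ and $P(K)$ constructed in the proof of Theorem~\ref{thm:smooth}, and conclude $\tau(P(K))>\tau(K)$ and $s(P(K))>s(K)$. The paper's proof is terser but identical in substance; your extra bookkeeping about stabilizations and the equality $\tau(K)=g_4(K)=g(K)$ is accurate and harmless.
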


\begin{proof}Both $\tau(K)$ and $s(K)$ refine the slice-Bennequin inequality in the following sense \cite{Olga2004,Olga2006, Shumakovitch}:
$$  tb(K)+|rot(K)|\le 2\tau(K)-1\le 2g_4(K)-1. $$
$$ tb(K)+|rot(K)|\le s(K)-1\le 2g_4(K)-1.$$ 
In Theorem \ref{thm:smooth}, we considered knots for which both inequalities are sharp;  $2\tau(K)=s(K)=2g_4(K)=2g(K).$  Applied to $P(K)$, the inequalities show that $\tau(P(K))>\tau(K)$ and $s(P(K))>s(K)$.
\end{proof}

It is tempting to believe that $K$ and $P(K)$ are topologically concordant, due to the fact that the Whitehead double embeds as a separating curve on the minimal genus Seifert surface of $P(K)$ (note that a simple exercise shows that $g(P(K))=g(K)+1$, with minimal genus Seifert surface for $P(K)$ provided by a minimal genus Seifert surface for $K$, glued to the twice punctured torus in $S^1\times D^2$ whose boundary is mapped to $P$ and a copy of the longitude $\partial(S^1\times D^2)$).  In light of this we can surger the Seifert surface along the Whitehead double, using the flatly embedded disk in $D^4$ provided by Freedman's theorem.  This only proves, however, that $g_4(K)=g_4(P(K))$.  A sufficient condition to  show that $K$ and $P(K)$ are concordant would be if $P\subset S^1\times D^2$ were concordant in $S^1\times D^2\times [0,1]$ to the core of the solid torus. This isn't true and can be seen by a variety of methods.  For instance,  if it were true then the knot obtained by adding a full twist to $P$ along the meridian of $S^1\times D^2$ would be topologically slice (perform $-1$ surgery along the meridian of the solid torus times $[0,1]$), a possibility ruled out by the classical signature \footnote{We thank Danny Ruberman for suggesting this argument, which is simpler than what we  initially used.}.

\section{Invariants of rational concordance}\label{sec:Qinvts}

In this section we will observe that many classical concordance invariants obstruct knots being $\Q$-concordant. This question has been considered in even greater generality in ~\cite{Cha2}. From this we deduce that only rarely is a knot $K$  $\Q$-concordant to its cable $K(p,1)$. We then observe that the $\tau$-invariant of Oszv\'{a}th-Szab\'{o} and Rasmussen can be used to obstruct smooth $\Q$-concordance (even between topologically slice knots). This is then used, in conjunction with known computations of $\tau$, to give examples of topologically slice knots $K$ which are not $\Q$-concordant to \emph{any} of their cables $K(p,1)$.

Before beginning we should point out that there \emph{do} exist non-slice knots $K$ for which $K$ is smoothly $\Q$-concordant to $K(p,1)$ for \emph{every} non-zero $p$. For suppose that $K$ is a smoothly $\Q$-slice knot that is not a smoothly slice knot. Examples of this are provided by the figure-eight knot (\cite[p.63]{Cha2}\cite[Lemma 2.2]{CHL6}), or more generally any (non-slice) strongly negative-amphicheiral knot ~\cite{Ka4}. Such a knot $K$ is $\Q$-concordant to the unknot $U$. Thus $K(p,1)$ is $\Q$-concordant to the unknot $U(p,1)$ for any $p$. Thus $K$ is $\Q$-concordant to $K(p,1)$.

\begin{proposition}\label{prop:coolexs} Suppose $K$ is a strongly negative-amphicheiral knot. Then for every non-zero $p$, $K$ is $\Q$-concordant to $K(p,1)$.
\end{proposition}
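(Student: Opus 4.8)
The plan is to exploit the fact, already recalled in the text just before the statement, that a strongly negative-amphicheiral knot $K$ is $\Q$-slice, i.e. $\Q$-concordant to the unknot $U$, together with the observation that the cabling operation $K \mapsto K(p,1)$ respects $\Q$-concordance. First I would establish the following general claim: if $J_0$ is $\Q$-concordant to $J_1$ in a rational homology cobordism $W$, then $J_0(p,1)$ is $\Q$-concordant to $J_1(p,1)$. The reason is that the $(p,1)$-cable is a winding number $p$ satellite with pattern the $(p,1)$-torus knot in the standard solid torus, and satellite operations can be performed on concordances: given the annulus $A \subset W$ realizing the $\Q$-concordance from $J_0$ to $J_1$, one removes a tubular neighborhood $A \times D^2$ of $A$ and glues back $A \times D^2$ with the pattern $P_{p,1} \times [0,1]$ inserted fiberwise; since the pattern solid torus here is standardly embedded and its complement in $S^3$ is a solid torus (this is the $(p,1)$-torus knot, which is the unknot), a Mayer–Vietoris computation shows the resulting $4$-manifold is still a $\Q$-homology $S^3 \times [0,1]$, and it contains a properly embedded annulus restricting to $J_0(p,1)$ and $J_1(p,1)$. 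In fact, one can simply cite Theorem~\ref{thm:mainsatelliteresult} or Proposition~\ref{prop:Rbasichomcobordism}, but the cleanest route is a direct satellite-on-concordance argument since $\widetilde{P}_{p,1}$ is unknotted.

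Second, I would assemble the chain of $\Q$-concordances. Since $K$ is strongly negative-amphicheiral, by the cited results (\cite{Ka4}) $K$ is $\Q$-concordant to $U$. Applying the cabling claim with $p$ fixed, $K(p,1)$ is $\Q$-concordant to $U(p,1)$. But $U(p,1)$ is the $(p,1)$-cable of the unknot, which is again the unknot $U$. Hence $K(p,1)$ is $\Q$-concordant to $U$, and therefore $K(p,1)$ is $\Q$-concordant to $K$ by transitivity and symmetry of the $\Q$-concordance relation (which is an equivalence relation, as $\Q$-homology $S^3 \times [0,1]$'s can be stacked and reversed). Since this works for every nonzero $p$, the proposition follows.

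The only genuine point requiring care — and the step I expect to be the main obstacle — is the satellite-on-concordance construction in a rational homology cobordism rather than in $S^3 \times [0,1]$: one must verify that gluing the pattern cylinder into the complement of a neighborhood of the concordance annulus $A$ in $W$ yields a manifold whose rational homology is still that of $S^3 \times [0,1]$, and that the meridional identifications are compatible with orientations. This is a routine Mayer–Vietoris argument entirely parallel to the proof of Theorem~\ref{thm:mainsatelliteresult} given above (indeed it is a relative version of the decomposition $M_{P(K)} = (S^3 \setminus \nu(K)) \cup (ST \setminus P)$ used there), the key input being that $\widetilde{P}_{p,1} = U$ so $ST \setminus P_{p,1}$ is a homology (in fact genuine) solid torus; I would present it briefly and otherwise defer to the machinery already in Section~\ref{sec:mainresult}. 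Everything else is formal.
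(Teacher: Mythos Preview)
Your proposal is correct and follows exactly the argument the paper gives in the paragraph immediately preceding the proposition: $K$ is $\Q$-concordant to $U$ by Kawauchi's result on strongly negative-amphicheiral knots, hence $K(p,1)$ is $\Q$-concordant to $U(p,1)=U$, and transitivity finishes. If anything, you supply more detail than the paper does on the one nontrivial step---that cabling (or more generally a satellite operation with unknotted $\widetilde{P}$) carries a $\Q$-concordance to a $\Q$-concordance---which the paper simply asserts.
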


However, this is rare as the following subsection shows.

\begin{subsection}{Topological Invariants of rational concordance}

If $K$ is a knot in $S^3$ and $V$ is a Seifert matrix for $K$, then recall the Levine-Tristram $\omega$-signature of $K$, $\sigma_K(\omega)$, for any $|\omega|=1$ is the signature of
$$
(1-\omega)V+(1-\overline{\omega})V^T.
$$
The only explicit reference we found for the following theorem is ~\cite[Thm. 1.1]{ChaKo1} (see also ~\cite[Theorem 1.7]{CO1}\cite{CO1a}).

\begin{proposition}\cite[Thm. 1.1]{ChaKo1}\label{prop:sigsinvt} If $K_0$ is CAT $\Q$-concordant to $K_1$ then for any prime $p$ and $\omega=e^{2\pi ik/p}$,
$$
\sigma_{K_0}(\omega)=\sigma_{K_1}(\omega).
$$
\end{proposition}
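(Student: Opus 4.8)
The plan is to run the classical branched-cover argument showing that Levine--Tristram signatures are concordance invariants, checking that everything survives with $\Q$-coefficients. Suppose $K_0$ is CAT $\Q$-concordant to $K_1$ via a properly embedded annulus $A$ in a $4$-manifold $W$ which is a $\Q$-homology $S^3\times[0,1]$, with $\partial A=K_0\sqcup -K_1$. A Mayer--Vietoris computation, using that $W$ is $\Q$-acyclic, shows $H_1(W\setminus\nu(A);\Q)\cong\Q$ generated by a meridian of $A$; hence there is an epimorphism $H_1(W\setminus\nu(A);\Z)\onto\Z/p$. First I would use this to form the $p$-fold cyclic branched cover $Y:=\Sigma_p(W,A)\to W$, a $4$-manifold carrying a $\Z/p$-action, with quotient $W$ and boundary $\Sigma_p(S^3,K_0)\sqcup -\Sigma_p(S^3,K_1)$. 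Because $p$ is prime and $\Delta_{K_i}(1)=\pm1$ while $\Phi_p(1)=p$, the cyclotomic polynomial $\Phi_p$ does not divide $\Delta_{K_i}$; therefore $b_1(\Sigma_p(S^3,K_i))=0$ and each $\Sigma_p(S^3,K_i)$ is a $\Q$-homology sphere, and a transfer/Mayer--Vietoris argument then shows $Y$ is a $\Q$-homology cobordism between them.

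\textbf{Key steps.} Next I would invoke the standard identification (Viro, Kauffman--Taylor, Gilmer, Casson--Gordon) of Levine--Tristram signatures with eigenspace signatures of branched covers: for a knot $K$ with Seifert surface $F$, push $\operatorname{int}F$ into $B^4$ to get $\hat F$ with $\partial\hat F=K$ and set $Z_K:=\Sigma_p(B^4,\hat F)$; then for $\zeta=e^{2\pi i/p}$ and $1\le k\le p-1$, the signature of the intersection form of $Z_K$ restricted to the $\zeta^k$-eigenspace of $H_2(Z_K;\mathbb{C})$ equals $\sigma_K(\zeta^k)$. Gluing, form the closed $4$-manifold $X=Z_{K_0}\cup_{\Sigma_p(S^3,K_0)}Y\cup_{\Sigma_p(S^3,K_1)}(-Z_{K_1})$, which is $\Sigma_p(\bar W,\Sigma_0)$ for $\bar W=B^4\cup W\cup(-B^4)$ a $\Q$-homology $S^4$ and $\Sigma_0=\hat F_0\cup A\cup(-\hat F_1)$ a closed surface. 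Since $H_2(\bar W;\Q)=0$ we get $[\Sigma_0]^2=0$, and $\operatorname{sign}(\bar W)=0$, so the branched-cover signature formula gives $\operatorname{sign}(X)=p\operatorname{sign}(\bar W)-\tfrac{p^2-1}{3p}[\Sigma_0]^2=0$; with $[\Sigma_0]^2=0$ the Atiyah--Singer $G$-signature theorem then forces $\operatorname{sign}_{\zeta^k}(X)=0$ for all $k$. On the other hand, Novikov additivity for eigenspace signatures together with the vanishing of the contribution of the $\Q$-homology cobordism $Y$ (equivariant ``half lives, half dies'', legitimate because the $\Sigma_p(S^3,K_i)$ are $\Q$-homology spheres) yields $\operatorname{sign}_{\zeta^k}(X)=\sigma_{K_0}(\zeta^k)-\sigma_{K_1}(\zeta^k)$. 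Hence $\sigma_{K_0}(\zeta^k)=\sigma_{K_1}(\zeta^k)$ for $1\le k\le p-1$, which is the claim (the case $\omega=1$ being trivial).

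\textbf{Main obstacle.} The step I expect to be hardest is showing that $Y$ contributes nothing to the equivariant signatures, i.e.\ that each $\zeta^k$-eigenform on $H_2(Y;\mathbb{C})$ is metabolic. This is where one must work carefully over $\Q(\zeta)$ and where primality of $p$ is genuinely used: it is precisely $\Phi_p\nmid\Delta_{K_i}$ (forced by $\Delta_{K_i}(1)=\pm1$) that makes the $\Sigma_p(S^3,K_i)$ rational homology spheres and lets the equivariant half-lives-half-dies argument run. Two routine points to settle along the way are that $\hat F$ can be pushed in with self-intersection zero and that $\Sigma_0$ really closes up to an embedded surface. As a shortcut, one may instead cite \cite[Thm.~1.1]{ChaKo1} directly, since that is exactly this proposition; the outline above is intended to recover their argument.
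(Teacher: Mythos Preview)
The paper gives no proof of this proposition; it simply quotes \cite[Thm.~1.1]{ChaKo1}. So there is nothing in the paper to compare your argument to beyond the bare citation, which you yourself offer as a shortcut at the end.

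Your branched-cover/$G$-signature outline is the natural approach, and most of the steps you list are sound. There is, however, a gap earlier than the ``main obstacle'' you identify. To build $Y=\Sigma_p(W,A)$ so that it restricts on the ends to the \emph{standard} covers $\Sigma_p(S^3,K_i)$, you need a homomorphism $H_1(W\setminus\nu(A);\Z)\to\Z/p$ sending the meridian $\mu_A$ to a generator. From $H_1(W\setminus\nu(A);\Q)\cong\Q$ you correctly get an epimorphism to $\Z/p$, but this does not force $\mu_A$ to map nontrivially. For a $\Q$-concordance the meridian need not be primitive in $H_1(W\setminus\nu(A);\Z)/\mathrm{torsion}$; this is precisely the ``complexity'' $k$ the paper discusses in the proof of Proposition~\ref{prop:Blanchinvt}. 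One has $H_1(W\setminus\nu(A))/\langle\mu_A\rangle\cong H_1(W)$, so $\mu_A$ can be divisible by any prime dividing $|H_1(W;\Z)|$. When $p\mid k$ (and the torsion part does not rescue you), no map to $\Z/p$ sends $\mu_A$ to a nonzero element, and the cover you want simply does not exist; the cover you do construct restricts to $p$ disjoint copies of $S^3$ on each end rather than $\Sigma_p(S^3,K_i)$. Primality of $p$ helps only insofar as any nonzero image is automatically a generator---it does not guarantee the image is nonzero.

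So the real place primality enters is subtler than the $\Phi_p\nmid\Delta_{K_i}$ step you flag, and some further argument (or a different, more algebraic route through Seifert forms as in Cha--Ko) is needed to cover all primes. Citing \cite{ChaKo1}, as the paper does, is the clean way out.
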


Indeed Cha shows that the ordinary algebraic knot concordance group embeds into an algebraic $\Z_{(2)}$-concordance group ~\cite[Section 2.2, 4.4]{Cha2} ( see also ~\cite[Theorem 1.7]{CO1}\cite{CO1a}).

\noindent Note that, since such values of $\omega$ are dense in the circle and since the Levine signature functions of $K_0$ and $K_1$ are constant except at roots of the Alexander polynomial, Proposition~\ref{prop:sigsinvt} implies that these functions agree for $K_0$ and $K_1$, except possibly at roots of their respective Alexander polynomials.

\begin{corollary}\label{cor:finiteorder} If $K$ is CAT $\Q$-concordant to $K(p,1)$ for some $p>1$, then $K$ is of finite order in the algebraic knot concordance group.
\end{corollary}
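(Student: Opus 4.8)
The plan is to deduce from Proposition~\ref{prop:sigsinvt} that $\Q$-concordance of $K$ and $K(p,1)$ forces the Levine--Tristram signature function $\sigma_K\colon S^1\to\Z$ to vanish identically, and then to apply Levine's classification of the algebraic knot concordance group $\mathcal{G}$, in which the torsion-free part $\Z^{\infty}$ is detected precisely by the jumps of the signature function. To this end, first record how $\sigma$ and $\Delta$ behave under $(p,1)$-cabling: $K(p,1)$ is the satellite of $K$ with pattern the $(p,1)$-curve in the solid torus, which has winding number $p$ and whose associated knot $\widetilde{P}$ in $S^3$ is the unknot, so by Litherland's satellite formula for the Levine--Tristram signature,
\[
\sigma_{K(p,1)}(\omega)=\sigma_K(\omega^{p})\qquad\text{for all }\omega\in S^1,
\]
while $\Delta_{K(p,1)}(t)\doteq\Delta_K(t^{p})$, so $\Delta_{K(p,1)}$ has only finitely many roots on $S^1$. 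If $K$ is CAT $\Q$-concordant to $K(p,1)$, then by Proposition~\ref{prop:sigsinvt} and the remark following it, $\sigma_K(\omega)=\sigma_{K(p,1)}(\omega)$ for every $\omega\in S^1$ outside the finite set $S$ consisting of the roots on $S^1$ of $\Delta_K$ and of $\Delta_{K(p,1)}$. Combining the two facts gives
\[
\sigma_K(\omega)=\sigma_K(\omega^{p})\qquad\text{for all }\omega\in S^1\setminus S,
\]
hence for almost every $\omega\in S^1$.

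Next I would observe that $\sigma_K$, being a bounded integer-valued step function, lies in $L^2(S^1)$, and the identity above says it is invariant under the degree-$p$ covering map $T(\omega)=\omega^{p}$ of the circle. Since $p\ge 2$, comparing Fourier coefficients yields $\widehat{\sigma_K}(n)=\widehat{\sigma_K}(n/p)$ whenever $p\mid n$ and $\widehat{\sigma_K}(n)=0$ whenever $p\nmid n$; writing any $n\ne 0$ as $n=p^{a}m$ with $p\nmid m$ (so $m\ne 0$) then forces $\widehat{\sigma_K}(n)=\widehat{\sigma_K}(m)=0$. (Equivalently: $\omega\mapsto\omega^{p}$ is ergodic for Haar measure on $S^1$.) Thus $\sigma_K$ equals a constant almost everywhere, and being a step function it equals that constant at every point of $S^1$. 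Evaluating at $\omega=1$, where the defining Hermitian matrix $(1-\omega)V+(1-\overline{\omega})V^{T}$ is the zero matrix, shows the constant is $0$; that is, $\sigma_K\equiv 0$.

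Finally, by Levine's computation $\mathcal{G}\cong\Z^{\infty}\oplus(\Z/2)^{\infty}\oplus(\Z/4)^{\infty}$, in which the projection onto the free summand $\Z^{\infty}$ is given by the Milnor signatures --- equivalently, by the jumps of the Levine--Tristram signature function --- the vanishing of $\sigma_K$ places $[K]$ in the torsion subgroup of $\mathcal{G}$. Hence $K$ has finite order (indeed order dividing $4$) in the algebraic knot concordance group, as claimed.

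\textbf{Main obstacle.} The proof assembles standard ingredients, so no step is genuinely hard; the two points needing care are citing and correctly applying Litherland's satellite formula together with the cabling formula for $\Delta_K$, so that the exceptional set $S$ is genuinely finite, and the elementary bookkeeping used to pass from the functional equation to $\sigma_K\equiv 0$. If one prefers to avoid even the short Fourier computation, one can argue instead that $\sigma_K$ is constant on each of the finitely many arcs of $S^1\setminus S$, that $T$ carries some such arc onto an arc of length at least $2\pi/|S|$, and that iterating $T$ eventually covers all of $S^1$, again forcing $\sigma_K$ to be globally constant.
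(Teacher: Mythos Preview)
Your proof is correct and follows the same overall strategy as the paper's: combine Litherland's cabling formula $\sigma_{K(p,1)}(\omega)=\sigma_K(\omega^p)$ with Proposition~\ref{prop:sigsinvt} to obtain the functional equation $\sigma_K(\omega)=\sigma_K(\omega^p)$ off a finite set, deduce that $\sigma_K$ vanishes, and then invoke Levine's criterion. The only genuine difference is in how you pass from the functional equation to vanishing. The paper argues directly: if $\sigma_K$ is not identically zero off the roots of $\Delta_K$, choose $\omega$ so that $\omega^p$ lies at (or just past) the \emph{first} point on the circle where $\sigma_K$ becomes nonzero while $\omega$ itself still lies in the initial zero region; then $\sigma_K(\omega)=0\ne\sigma_K(\omega^p)$, a contradiction. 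You instead use a Fourier/ergodicity argument to show that any $L^2$ function satisfying $f(\omega)=f(\omega^p)$ a.e.\ is a.e.\ constant. Both are valid; the paper's argument is more elementary and stays entirely within the step-function picture, while yours is a clean general principle. One small overstatement worth fixing: ``constant a.e.\ and a step function'' yields constancy only off the finite set $S$, not literally at every point of $S^1$ (the signature at a root of $\Delta_K$ can differ from its one-sided limits), but this is exactly what Levine's criterion requires, so the conclusion is unaffected.
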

\begin{proof} [Proof of Corollary~\ref{cor:finiteorder}] Assume that $K$ is TOP $\Q$-concordant to $K(p,1)$. Then their signatures agree (except possibly at roots of the Alexander polynomials). Suppose \emph{some} Levine-Tristram-signature of $K$ were non-zero. Since the $\omega=1$ signature always vanishes and since the Levine-Tristram signature function is locally constant, possibly jumping only at roots of the Alexander polynomial, we can  choose $\omega$ so that $\omega^p$ is (or is close to) the ``first'' value on the unit circle (smallest argument) for which $K$ has non-zero signature (and avoiding roots of the Alexander polynomial). That is we can choose $\omega$ so that
$$
\sigma_{K}(\omega)=0, ~~\text{and}~~ \sigma_{K}(\omega^p)\neq 0.
$$
But it is known by ~\cite{Kea2,Lith2} that,
$$
\sigma_{K(p,1)}(\omega)=\sigma_K(\omega^p).
$$
Combining this with Proposition~\ref{prop:sigsinvt}, we see that
$$
\sigma_K(\omega^p)=\sigma_K(\omega).
$$
This is false for our particular choice of $\omega$ above. Hence the signature function of $K$ vanishes (excluding roots of the Alexander polynomial). It is known that this is equivalent to $K$ being of finite order in the algebraic knot concordance group ~\cite{L5}.
\end{proof}

\begin{corollary}\label{cor:trefoil1} The right-handed trefoil knot, $T$, is not TOP $\Q$-concordant to $T(p,1)$ for any $p>1$.
\end{corollary}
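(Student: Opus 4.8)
The plan is to obtain this as an immediate consequence of Corollary~\ref{cor:finiteorder}: by that corollary it suffices to verify that the right-handed trefoil $T$ is \emph{not} of finite order in the algebraic knot concordance group, for then $T$ cannot be TOP $\Q$-concordant to $T(p,1)$ for any $p>1$.

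To check that $T$ has infinite order I would compute a single Levine--Tristram signature. Taking the Seifert matrix
$$
V=\begin{pmatrix} -1 & 1 \\ 0 & -1\end{pmatrix}
$$
for $T$, the ordinary signature $\sigma_T(-1)$ is the signature of $V+V^T=\begin{pmatrix}-2 & 1\\ 1 & -2\end{pmatrix}$, whose eigenvalues are $-1$ and $-3$; hence $\sigma_T(-1)=-2\neq 0$. Since the vanishing of the Levine--Tristram signature function of a knot (away from roots of its Alexander polynomial) is equivalent to that knot having finite order in the algebraic knot concordance group ~\cite{L5}, the non-vanishing of $\sigma_T(-1)$ shows $T$ has infinite order. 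Combining this with Corollary~\ref{cor:finiteorder} proves the statement.

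I do not anticipate any real obstacle: the only computation is the standard signature of the trefoil, and everything else is a direct appeal to Corollary~\ref{cor:finiteorder} and Levine's characterization of finite-order classes. I would also remark that the argument is not special to $T$: it applies verbatim to any knot whose Levine--Tristram signature function is not identically zero (for instance any nontrivial torus knot, or more generally the closure of any nontrivial positive braid), none of which is TOP $\Q$-concordant to any of its $(p,1)$-cables with $p>1$. Alternatively, one can bypass Corollary~\ref{cor:finiteorder} entirely and argue directly from Proposition~\ref{prop:sigsinvt} and the cabling formula $\sigma_{T(p,1)}(\omega)=\sigma_T(\omega^p)$ of ~\cite{Kea2,Lith2}, choosing $\omega$ near $1$ with $\sigma_T(\omega)=0$ but $\sigma_T(\omega^p)\neq 0$ to reach a contradiction; this is simply the proof of Corollary~\ref{cor:finiteorder} specialized to $T$.
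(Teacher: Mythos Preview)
Your proposal is correct and is exactly the argument the paper intends: Corollary~\ref{cor:trefoil1} is stated without proof as an immediate consequence of Corollary~\ref{cor:finiteorder}, using the fact that the trefoil has nonzero Levine--Tristram signature (and hence infinite order in the algebraic concordance group). Your explicit signature computation and your remark that the argument extends to any knot with nonvanishing signature function match the paper's own later observation in the proof of Theorem~\ref{thm:mainapplication}.
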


There are other related papers that discuss the question of whether a knot is $\Z$-concordant to its $(p,1)$-cable ~\cite{Ka5}\cite{Lith1}\cite{CRL}.

The following criteria can be applied even when the knot signatures fail. The first does not seem to appear in the literature although it does follow, for example, from combining results of the much more general \cite{Cha2}. The second is implicit in \cite{Cha2}. We sketch a proof in order to make a pedagogical point about rational concordance.

\begin{proposition}\label{prop:polysinvt} If $K_0$ is CAT $\Q$-concordant to $K_1$ then for some positive integer $k$ and for some integral polynomial  $f$,
$$
\delta_{0}\left(t^k\right)\delta_{1}\left(t^k\right)\doteq f(t)f\left(t^{-1}\right),
$$
where $\delta_{i}(t)$ is the Alexander polynomial of $K_i$.
\end{proposition}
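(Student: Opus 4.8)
The plan is to mimic the classical proof that a slice knot satisfies the Fox--Milnor condition, keeping track of the one new feature of the rational setting: the meridian of a rational slice-disk complement is only a \emph{rational} generator of first homology, and this forces the substitution $t\mapsto t^{k}$.

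First I would reduce to the rationally slice case. Removing a tubular neighborhood of a properly embedded spanning arc on the annulus of a CAT $\Q$-concordance from $K_0$ to $K_1$ exhibits the connected sum $J:=K_0\#\overline{K_1}^{\,r}$ (whose Alexander polynomial is $\doteq\delta_0(t)\,\delta_1(t)$, reversal and mirroring having no effect on $\delta$) as a CAT $\Q$-slice knot; so it suffices to prove that if $J$ is CAT $\Q$-slice then $\delta_J(t^{k})\doteq f(t)f(t^{-1})$ for some positive integer $k$ and integral $f$, where $\delta_J\doteq\delta_0\delta_1$.

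Next, by Proposition~\ref{prop:Rbasichomcobordism} applied with $R=\Q$, being $\Q$-slice gives a compact CAT $4$-manifold $N$ that is a $\Q$-homology $S^1$ with $\partial N = M_J$, the zero-framed surgery; the inclusion-induced map $H_1(M_J;\Q)\to H_1(N;\Q)\cong\Q$ is an isomorphism, so the image $m$ of the meridian $\mu_J$ generates $H_1(N;\Q)$. Let $\epsilon\colon H_1(N;\Z)\onto\Z$ be the projection onto the free part; then $k:=\epsilon(m)$ is a nonzero integer, and after adjusting the sign of $\epsilon$ we may take $k>0$. Passing to the associated infinite cyclic cover $\widetilde N\to N$ with deck transformation $t$, and restricting it over $M_J\subset\partial N$: because $\epsilon(\mu_J)=k$, this restriction is $k$ disjoint copies of the ordinary infinite cyclic cover $M_J^{\infty}$, cyclically permuted by $t$, with $t^{k}$ acting as the usual deck transformation on each copy. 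Consequently $H_1(M_J;\Q[t^{\pm1}])$ is the induced module $\Q[t^{\pm1}]\otimes_{\Q[s^{\pm1}]}H_1(M_J^{\infty};\Q)$ with $s=t^{k}$, a torsion $\Q[t^{\pm1}]$-module whose order is $\doteq \delta_J(t^{k})=\delta_0(t^{k})\,\delta_1(t^{k})$, using that $H_1(M_J^{\infty};\Q)$ is the rational Alexander module of $J$. This is the step that introduces $t\mapsto t^{k}$, and it is the pedagogical point of the proof: even when each $\mu_{K_i}$ is integrally primitive in its own knot exterior, it need not be integrally primitive in $H_1(N)$.

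Finally I would run the ``half lives, half dies'' argument for the $4$-manifold $N$. Since $N$ is a $\Q$-homology circle, $H_*(N;\Q(t))\cong H_*(S^1;\Q(t))=0$; in particular $H_1(N;\Q[t^{\pm1}])$ is torsion. The module $H_1(M_J;\Q[t^{\pm1}])$, being torsion, carries a nonsingular Blanchfield linking pairing with values in $\Q(t)/\Q[t^{\pm1}]$; Poincar\'{e}--Lefschetz duality for $(N,\partial N)$ with $\Q[t^{\pm1}]$-coefficients together with universal coefficients over the principal ideal domain $\Q[t^{\pm1}]$ then show that $P:=\ker\!\big(H_1(M_J;\Q[t^{\pm1}])\to H_1(N;\Q[t^{\pm1}])\big)$ equals its own annihilator $P^{\perp}$. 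For a nonsingular linking pairing this forces the order of the ambient module to be a norm: $\operatorname{ord}H_1(M_J;\Q[t^{\pm1}])\doteq g(t)\,g(t^{-1})$ with $g:=\operatorname{ord}(P)$. Combining with the previous paragraph, and clearing denominators (Gauss's lemma) to replace $g$ by an integral polynomial $f$, yields $\delta_0(t^{k})\,\delta_1(t^{k})\doteq f(t)f(t^{-1})$. I expect the main obstacle to be making this last step fully rigorous: verifying that the relevant $\Q[t^{\pm1}]$-modules are torsion, that the Blanchfield pairing on $\partial N$ is nonsingular, and above all that $P=P^{\perp}$ rather than merely $P\subseteq P^{\perp}$ --- it is precisely here that the hypothesis that $N$ is a $\Q[t^{\pm1}]$-homology circle gets used. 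The reduction to the slice case and the induced-module computation are routine once set up correctly.
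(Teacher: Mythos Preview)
Your proof is correct and follows essentially the same route as the paper's sketch: identify the complexity $k$ from the image of the meridian in the free part of $H_1$, recognize the twisted Alexander module as the induced module with order $\delta_i(t^{k})$, and then run the self-annihilating-submodule Blanchfield argument to obtain the norm condition. The only organizational difference is that you first reduce to a single $\Q$-slice knot $J=K_0\#\overline{K_1}^{\,r}$ bounding in a $\Q$-homology circle $N$, whereas the paper works directly with the annulus exterior $E_A$ and the direct sum $H_1(E_0;\Q[t^{\pm1}])\oplus H_1(E_1;\Q[t^{\pm1}])\to H_1(E_A;\Q[t^{\pm1}])$; the mathematical content is the same.
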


Most generally, let $\mathcal{B}\ell^K(t)$ denote the nonsingular Blanchfield linking form defined on the rational Alexander module of $K$, $\mathcal{A}^\Q(K)\equiv H_1(M_K;\Q[t,t^{-1}])$. Then,

\begin{proposition}\label{prop:Blanchinvt} If $K_0$ is CAT $\Q$-concordant to $K_1$ then for some positive integer $k$,
$$
\mathcal{B}\ell^{K_0}(t^k)\sim \mathcal{B}\ell^{K_1}(t^k),
$$
where these denote the induced forms on the module
$$
\mathcal{A}^\Q(K_i)\otimes_{\Q[t,t^{-1}]}\Q[t,t^{-1}]
$$
where here the right-hand $\Q[t,t^{-1}]$ is a module over itself via the map $t\to t^k$; and $\sim$ denotes equality in the Witt group of such forms (see \cite{Cha2}\cite{Hi}).
\end{proposition}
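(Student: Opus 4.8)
The plan is to run the standard Casson--Gordon/Blanchfield-form argument, as systematized in \cite{Cha2}\cite{Hi}, paying attention to the one feature in which rational concordance genuinely differs from integral concordance. First I would build the ambient $4$-manifold: given a CAT $\Q$-concordance, i.e.\ an annulus $A$ in a CAT $\Q$-homology $S^3\times[0,1]$ joining $K_0$ to $K_1$, let $V$ be obtained from the exterior of $A$ by ``zero-framed surgery on $A$'' (adjoin $S^1\times D^2\times[0,1]$, exactly as in the proof of Proposition~\ref{prop:basichomcobordism}). Then $\partial V=M_{K_0}\sqcup -M_{K_1}$, the usual Mayer--Vietoris/Alexander-duality computation gives $H_*(V;\Q)\cong H_*(S^1;\Q)$ with $H_1(V;\Q)=\Q$, and the positive meridians $\mu_0,\mu_1$ are each isotopic in $V$ to a fixed meridian circle of $A$, hence represent a single class $\mu\in H_1(V;\Z)$.

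The point special to the rational setting enters here: although $H_1(V;\Q)=\Q$, the class $\mu$ need not be \emph{primitive} modulo torsion in $H_1(V;\Z)$. So I would fix an epimorphism $\phi\colon\pi_1(V)\onto\Z=\langle t\rangle$ representing a generator of $H_1(V;\Z)/(\text{torsion})\cong\Z$, chosen with $\phi(\mu)=k>0$. This positive integer $k$ is the one in the statement; for an \emph{integral} concordance $H_1(V;\Z)\cong\Z$ and $\mu$ is a generator, so $k=1$ and one recovers the classical invariance of the Blanchfield form under concordance. This non-primitivity of the meridian is the ``pedagogical point'' the proposition is meant to illustrate.

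Next I would identify the relevant modules and forms. Equip $V$ and $\partial V$ with the $\Q[t,t^{-1}]$-coefficient system determined by $\phi$. Since $\phi$ restricted to $\pi_1(M_{K_i})$ sends $\mu_i$ to $t^k$, the induced cover of $M_{K_i}$ consists of $k$ disjoint copies of the ordinary infinite cyclic cover, cyclically permuted by $t$, with $t^k$ acting as the generating deck transformation on each copy; therefore, as a $\Q[t,t^{-1}]$-module,
\[
H_1\!\left(M_{K_i};\Q[t,t^{-1}]\right)\;\cong\;\mathcal{A}^\Q(K_i)\otimes_{\Q[t,t^{-1}]}\Q[t,t^{-1}],
\]
the right-hand factor being a module over the left one via $t\mapsto t^k$, exactly as in the statement. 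This module is $\Q[t,t^{-1}]$-torsion and the Blanchfield pairing transfers along $t\mapsto t^k$ to the nonsingular form $\mathcal{B}\ell^{K_i}(t^k)$; hence the linking form of $\partial V$ with these coefficients is $\mathcal{B}\ell^{K_0}(t^k)\oplus-\mathcal{B}\ell^{K_1}(t^k)$. Finally I would invoke ``half lives half dies'': since $V$ is a $\Q$-homology cobordism, $H_*(V,M_{K_i};\Q)=0$, so $H_*(V,M_{K_i};\Q(t))=0$, while $\mathcal{A}^\Q(K_i)\otimes_{\Q[t,t^{-1}]}\Q(t)=0$; feeding these facts into the long exact sequences of the pairs $(V,\partial V)$ and $(V,M_{K_i})$ together with Poincar\'e--Lefschetz duality over the PID $\Q[t,t^{-1}]$ shows that $P:=\ker\!\big(H_1(\partial V;\Q[t,t^{-1}])\to H_1(V;\Q[t,t^{-1}])\big)$ coincides with its own annihilator under the linking form. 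Thus $\mathcal{B}\ell^{K_0}(t^k)\oplus-\mathcal{B}\ell^{K_1}(t^k)$ is metabolic, i.e.\ $\mathcal{B}\ell^{K_0}(t^k)\sim\mathcal{B}\ell^{K_1}(t^k)$.

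I expect the genuinely technical step to be the last one --- verifying the metabolizer property $P=P^{\perp}$ from the $\Q$-homology cobordism hypothesis, which requires the duality bookkeeping with $\Q[t,t^{-1}]$- and $\Q(t)$-coefficients. This, however, is the standard Casson--Gordon-type argument carried out in precisely this generality in \cite{Cha2}, so I would cite it rather than reprove it; the genuinely new ingredient is only the passage to the $k$-fold cover dictated by $\phi$. As a by-product, taking orders in the Witt relation $\mathcal{B}\ell^{K_0}(t^k)\sim\mathcal{B}\ell^{K_1}(t^k)$ immediately yields Proposition~\ref{prop:polysinvt}, since the order of $\mathcal{A}^\Q(K_i)\otimes_{\Q[t,t^{-1}]}\Q[t,t^{-1}]$ (base change $t\mapsto t^k$) is $\delta_i(t^k)$ up to units, and Witt-equivalent nonsingular forms have orders differing by a norm $f(t)f(t^{-1})$.
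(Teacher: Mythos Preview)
Your proposal is correct and follows essentially the same approach as the paper's proof. The only cosmetic difference is that the paper works with the exterior $E_A$ of the annulus and the knot exteriors $E_i$ rather than closing up to the zero-surgeries $V$ and $M_{K_i}$; in both versions one reads off the complexity $k$ from the image of the meridian in $H_1/(\text{torsion})$, identifies the resulting twisted modules as $\mathcal{A}^\Q(K_i)\otimes_{\Q[t,t^{-1}]}\Q[t,t^{-1}]$ via $t\mapsto t^k$, and then cites the standard self-annihilating-kernel argument (the paper cites \cite[Theorem 4.4]{COT}, you cite \cite{Cha2}) to conclude Witt equivalence.
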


\begin{proof} Suppose $K_0$ is CAT $\Q$-concordant to $K_1$ via an annulus, $A$, embedded in a $\Q$-homology $S^3\times [0,1]$, $W$. Let $E_0$, $E_1$ and $E_A$ denote the exteriors of $K_0$, $K_1$ and $A$ respectively. Then
$$
\frac{H_1(E_*;\Z)}{\mathrm{torsion}}\cong \Z
$$
for $*=0,1,A$. The \emph{complexity of the concordance}  is the positive integer $k$ for which the image of the meridian $\mu_i$, for $i=0,1$,   under the inclusion-induced map $j_i$
$$
\frac{H_1(E_0;\Z)}{\mathrm{torsion}}\overset{j_0}{\longrightarrow}\frac{H_1(E_A;\Z)}{\mathrm{torsion}}
\overset{j_1}{\longleftarrow}\frac{H_1(E_1;\Z)}{\mathrm{torsion}},
$$
is $\pm k$ times a generator. This was defined in \cite{CO1,CO1a}, but see also \cite{Cha2, ChaKo1}, and was called the \emph{multiplicity} in \cite[page 463]{COT}. There is a unique epimorphism
$$
\phi:\pi_1(E_A)\to \Z.
$$
This defines a coefficient system on $E_A$ and also on $E_i$ for $i=0,1$  by setting $\phi_i=\phi\circ j_i$. Then it is well-known that the Alexander modules using these induced coefficient systems are not the ordinary Alexander modules but rather,
$$
H_1(E_i;\Z[t,t^{-1}])\cong \mathcal{A}(K_i)\otimes_{\Z[t,t^{-1}]}\Z[t,t^{-1}],
$$
where the right-hand $\Z[t,t^{-1}]$ is a module over itself via the map $t\to t^k$. The order of such a module is well-known to be $\delta_{i}(t^k)$ where $\delta_{i}(t)$ is the order of $\mathcal{A}(K_i)$. (This ``tensored up''  module is the same as the Alexander module of the $(k,1)$-cable of $K_i$). The coefficient system $\phi$ also induces Blanchfield linking forms on these modules and these differ from the ordinary Blanchfield form in the analogous manner.

If $A$ were an actual concordance then we have the classical result that the kernel, $P$, of the map
$$
\mathcal{M}\equiv H_1(E_0;\Q[t,t^{-1}])\oplus H_1(E_1;\Q[t,t^{-1}])\to H_1(E_A;\Q[t,t^{-1}])
$$
is self-annihilating with respect to the ordinary Blanchfield forms. It would then follow (by definition) that the Blanchfield forms are equivalent in the Witt group. It also would follow that $P$ is isomorphic to the dual of $\mathcal{M}/P$, quickly yielding the classical result 
$$
\delta_{K_0}\left(t\right)\delta_{K_1}\left(t\right)\doteq f(t)f\left(t^{-1}\right)
$$
for some polynomial $f$. In the situation that $A$ is only a $\Q$-concordance, these results are also known (see for example \cite[Theorem 4.4, Lemma 2.14]{COT}). The only difference is that the relevant modules and forms are not the ordinary Alexander modules but rather are  ``tensored up'' as above, and the orders of the relevant modules are not the actual Alexander polynomials of $K_i$, but are $\delta_i(t^k)$. The claimed results follow.
\end{proof}

\begin{corollary}\label{cor:twist} Suppose $K$ is the $3$-twist knot with a negative clasp . Then, although $K$ is of finite order in the algebraic concordance group, $K$ is not TOP $\Q$-concordant to $K(p,1)$ for any $p>1$.
\end{corollary}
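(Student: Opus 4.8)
The plan is to invoke Proposition~\ref{prop:polysinvt}, since the Levine--Tristram obstruction behind Corollary~\ref{cor:finiteorder} is useless here: the $3$-twist knot with a negative clasp has Alexander polynomial $\delta_K(t)\doteq 3t^2-7t+3$, whose roots $\tfrac{7\pm\sqrt{13}}{6}$ are real and positive, hence not on the unit circle, so all its Levine--Tristram signatures vanish and (by \cite{L5}) $K$ has finite order in the algebraic knot concordance group. I will also use the standard computation that the $(p,1)$-cable has Alexander polynomial $\delta_{K(p,1)}(t)\doteq \delta_K(t^p)$.

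Assume, for contradiction, that $K$ is TOP $\mathbb{Q}$-concordant to $K(p,1)$ for some $p>1$. By Proposition~\ref{prop:polysinvt} there are a positive integer $k$ and an integral polynomial $f$ with $\delta_K(t^k)\,\delta_{K(p,1)}(t^k)\doteq f(t)f(t^{-1})$, that is, writing $\delta(t)=3t^2-7t+3$,
\[
\delta(t^k)\,\delta(t^{pk})\ \doteq\ f(t)f(t^{-1}).
\]
The first observation is that for $p>1$ the polynomials $\delta(t^k)$ and $\delta(t^{pk})$ are coprime in $\mathbb{Q}[t]$: a common root $\beta$ would force both $\beta^k$ and $(\beta^k)^p$ to lie in $\{\alpha,\alpha^{-1}\}$, where $\alpha=\tfrac{7+\sqrt{13}}{6}>1$ is the larger root of $\delta$ (the other being $\alpha^{-1}$, as $\alpha\,\alpha^{-1}=1$), whence $\alpha^{p\pm 1}=1$, impossible since $\alpha$ is real and $\neq\pm1$. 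Both $\delta(t^k)$ and $\delta(t^{pk})$ are palindromic, hence self-reciprocal. Sorting the irreducible factors of $f$ according to which of these two coprime factors of the product they divide, and using that $\mathbb{Q}[t,t^{-1}]$ is a unique factorization domain, one concludes that $\delta(t^k)$ must itself be a ``norm'': $\delta(t^k)\doteq g(t)g(t^{-1})$ for some $g\in\mathbb{Q}[t]$.

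It remains to rule this out, and for that I would prove that $\delta(t^k)=3t^{2k}-7t^k+3$ is irreducible over $\mathbb{Q}$ for every $k\geq 1$. (An irreducible polynomial of degree $2k$ cannot equal $g(t)g(t^{-1})$ for nonconstant $g$, since up to a monomial the latter is a product of two polynomials of degree $\deg g=k$.) The roots of $\delta$ generate $\mathbb{Q}(\sqrt{13})$, so by the classical irreducibility criterion for the binomials $X^k-a$ it suffices to check that $\alpha$ is not a $q$-th power in $\mathbb{Q}(\sqrt{13})$ for any prime $q$ (the exceptional $4\mid k$ case is handled the same way). This follows from one computation: in the ring of integers of $\mathbb{Q}(\sqrt{13})$, which has class number one, $3$ splits while $2$ is inert, and since $N(7+\sqrt{13})=36$ one finds that $\alpha$ has valuation $\pm 1$ at a prime lying over $3$; a value divisible by no integer $\geq 2$, so $\alpha$ is not a proper power. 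Hence $X^k-\alpha$ is irreducible over $\mathbb{Q}(\sqrt{13})$, so $\delta(t^k)$ is irreducible over $\mathbb{Q}$, contradicting the previous paragraph.

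The main obstacle is this last, number-theoretic, step; everything else is formal. It is worth isolating where the content sits: for $k$ odd, irreducibility (indeed the non-norm property) is elementary, since evaluating $\delta(t^k)\doteq g(t)g(t^{-1})$ at $t=-1$ gives $13=\delta(-1)\doteq\pm g(-1)^2$, which is absurd. For $k$ even, a $3$-adic Newton polygon argument shows that any nontrivial rational factorization of $\delta(t^k)$ consists of two irreducible factors of degree $k$; a root of such a factor has $k$-th power $\alpha$ or $\alpha^{-1}$ and so generates a field containing $\sqrt{13}$, which again forces $X^k-\alpha$ to be reducible over $\mathbb{Q}(\sqrt{13})$. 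Thus in every case one is reduced to the single assertion that $\alpha=\tfrac{7+\sqrt{13}}{6}$ is not a proper power in $\mathbb{Q}(\sqrt{13})$, which is the crux of the argument.
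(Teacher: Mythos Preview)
Your argument is correct and follows essentially the same route as the paper's: apply Proposition~\ref{prop:polysinvt} and reach a contradiction via the irreducibility of $\delta(t^k)$ over $\mathbb{Q}$ for all $k\ge 1$. The paper is terser only because it cites \cite[Prop.~3.18]{Cha2} for that irreducibility and then observes directly that a product of two \emph{distinct} self-reciprocal irreducibles (distinct since $p>1$ forces different degrees) cannot equal $f(t)f(t^{-1})$ up to units, as any self-reciprocal irreducible appears in $f(t)f(t^{-1})$ with even multiplicity. Your coprimality-then-norm reduction is therefore an avoidable detour once irreducibility of both $\delta(t^k)$ and $\delta(t^{pk})$ is in hand; on the other hand, your valuation computation in $\mathbb{Q}(\sqrt{13})$ (showing $\alpha=\tfrac{7+\sqrt{13}}{6}$ has $\mathfrak{p}_3$-valuation $\pm 1$, hence is not a proper power) furnishes a self-contained proof of the irreducibility that the paper outsources to a reference.
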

\begin{proof} The Alexander polynomial of $K$ is $\delta(t)=3t-7+3t^{-1}$, whereas the Alexander polynomial of $K(p,1)$ is $\delta(t^p)$. If $K$ were TOP $\Q$-concordant to $K(p,1)$  then by Proposition~\ref{prop:polysinvt}, for some positive $k$ and integral polynomial $f(t)$,
$$
\delta\left(t^k\right)\delta\left(t^{kp}\right)=\pm t^g f(t)f\left(t^{-1}\right).
$$
But $\delta\left(t^k\right)$ (and thus ($\delta\left(t^{kp}\right)$) is irreducible for any $k$ ~\cite[Prop. 3.18]{Cha2}. This  contradicts unique factorization  if $p>1$.
\end{proof}

Casson-Gordon invariants and higher-order von-Neumann signatures should yield higher-order obstructions to $\Q$-concordance.

\end{subsection}
\begin{subsection}{Smooth Rational concordance invariants for topologically slice knots}

The Ozsv\'{a}th-Szab\'{o}-Rasmussen $\tau$-invariant is an integral-valued knot invariant that is invariant under smooth concordance and additive under connected sum ~\cite{OzSz2}. It is not invariant under topological concordance and therefore may be used in cases where algebraic invariants fail. It is also known  that it is an invariant of smooth rational concordance.

\begin{proposition}\label{prop:tau1} If $K$ is smoothly $R$-concordant to $J$ then $\tau(K)=\tau(J)$.
\end{proposition}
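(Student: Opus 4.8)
The plan is to reduce the statement to two facts from \cite{OzSz2}: that $\tau$ is additive under connected sum and satisfies $\tau(\overline J)=-\tau(J)$, where $\overline J$ denotes the concordance inverse of $J$ (the reverse of its mirror image); and that a knot bounding a smoothly properly embedded disk in a smooth $4$-manifold with boundary $S^3$ which is an $R$-homology $B^4$ has vanishing $\tau$ --- this last being exactly \cite[Theorem~1.1]{OzSz2} in the form already invoked in the proof of Corollary~\ref{cor:tauratU}. Granting these, it suffices to prove that the ``difference knot'' $D:=K\#\overline J$ is smoothly $R$-slice, since then $0=\tau(D)=\tau(K)+\tau(\overline J)=\tau(K)-\tau(J)$.

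So the main step is the construction showing that $R$-concordant knots have $R$-slice difference. Let $A$ be a smoothly properly embedded annulus in a smooth $4$-manifold $W$ that is an $R$-homology $S^3\times[0,1]$, with $\partial W=(S^3\times\{0\})\sqcup -(S^3\times\{1\})$ and $A\cap(S^3\times\{0\})=K$, $A\cap(S^3\times\{1\})=J$. Choose an arc $\alpha\subset A$ properly embedded in $A$ with one endpoint on $K$ and one on $J$, and a closed regular neighbourhood $\nu(\alpha)\cong D^3\times[0,1]$ of $\alpha$ in $W$ meeting $\partial W$ in balls $D^3\times\{0\}\subset S^3\times\{0\}$ and $D^3\times\{1\}\subset S^3\times\{1\}$, and meeting $A$ in a band. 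Put $W_0:=W\setminus\mathrm{int}\,\nu(\alpha)$ and $A_0:=A\setminus\mathrm{int}(A\cap\nu(\alpha))$. Then $\partial W_0$ is obtained by gluing two punctured $3$-spheres to the ends of $S^2\times[0,1]$, hence is $S^3$; $A_0$, an annulus with a band joining its two boundary circles removed, is a disk; and $\partial A_0\subset\partial W_0$ is precisely $K\#\overline J$ (with the above orientation conventions the $J$-summand appears reversed and mirrored). Finally, writing $W=W_0\cup_{S^2\times[0,1]}\nu(\alpha)$ and running Mayer--Vietoris with $R$-coefficients, one checks that the connecting map $H_3(W;R)\to H_2(S^2\times[0,1];R)$ is an isomorphism: a generator of $H_3(W;R)\cong R$ is a cross-sectional $S^3\times\{t\}$, which splits as a punctured $S^3$ in $W_0$ glued along $S^2\times\{t\}$ to $D^3\times\{t\}$ in $\nu(\alpha)$, and $S^2\times\{t\}$ generates $H_2(S^2\times[0,1];R)$. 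Since $\nu(\alpha)$ and $S^2\times[0,1]$ are homotopy equivalent to a point and a $2$-sphere, the sequence then gives $H_i(W_0;R)\cong H_i(B^4;R)$ for all $i$. Thus $D$ bounds the smooth disk $A_0$ in the $R$-homology ball $W_0$, so $\tau(D)=0$, and the proof concludes as above. (Alternatively one may invoke Corollary~\ref{cor:tauratU} via Proposition~\ref{prop:Rbasichomcobordism} instead of citing \cite{OzSz2} directly at this point.)

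The only real obstacle is bookkeeping: verifying that excising a neighbourhood of an arc running between the two ends of $W$ kills the class of the cross-sectional $S^3$ in $R$-homology, so that $W_0$ is an $R$-homology ball, and tracking orientations so that the boundary of $A_0$ is the connected sum $K\#\overline J$ with $\overline J$ the concordance inverse of $J$ (so that $\tau(\overline J)=-\tau(J)$ applies). No Floer-theoretic input beyond \cite{OzSz2} is required; everything else is the standard ``difference of concordant knots is slice'' argument transported to the $R$-homology setting.
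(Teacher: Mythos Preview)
Your proof is correct and follows essentially the same approach as the paper's: remove a tubular neighbourhood of an arc in $A$ running from $K$ to $J$, obtain an $R$-homology $4$-ball in which $K\#\overline{J}$ bounds a disk, and then invoke \cite[Theorem~1.1]{OzSz2} together with additivity and $\tau(\overline J)=-\tau(J)$. The only difference is level of detail: the paper simply asserts that the excision yields an $R$-homology ball, whereas you supply the Mayer--Vietoris verification.
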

\begin{proof} We are given that $K$ and $J$ are connected by a smooth annulus $A$ in a smooth $R$-homology $S^3\times [0,1]$, $W$. Choose an arc in $A$ from $K$ to $J$. By deleting a small neighborhood of this arc from $W$ we arrive at a smooth $R$-homology $4$-ball $\mathcal{B}$. The annulus $A$ is cut open yielding a $2$-disk whose boundary is the knot type of $K\#-J$. Thus $K\#-J$ is smoothly $R$-slice.  By ~\cite[Theorem 1.1]{OzSz2}, $\tau(K\#-J)=0$, so $\tau(K)=-\tau(-J)=\tau(J)$, the last property being also established in ~\cite{OzSz2}. 
\end{proof}

It is a fascinating question whether or not the $s$-invariant is zero on $\Q$-slice knots.

\end{subsection}

\section{Question~\ref{question:KirbyQ}}\label{sec:mainthm}

\begin{theorem}\label{thm:mainapplication} The answer to Question~\ref{question:KirbyQ} is ``No,'' in both the smooth and topological category. In the smooth category there exist counterexamples that are topologically slice. 
\end{theorem}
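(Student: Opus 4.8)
The plan is to fuse the two halves of the paper. By Corollary~\ref{cor:cable}, for every knot $K$ and every $p\ge 1$ the zero surgeries $M_K$ and $M_{K(p,1)}$ are smoothly $\Z[1/p]$-homology cobordant rel meridians, hence (since $\Z[1/p]\subset\Q$) $\Q$-homology cobordant rel meridians. So the hypothesis of Question~\ref{question:KirbyQ} is automatically met by $K$ and its $(p,1)$-cable, and all that remains is to choose, in each category, a knot $K$ and a $p>1$ for which $K$ is \emph{not} CAT $\Q$-concordant to $K(p,1)$; those non-concordance statements are exactly what Sections~\ref{sec:Zhomologycobordism}--\ref{sec:Qinvts} provide.

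\emph{Topological case.} I would take $K_0=T$, the right-handed trefoil, and $K_1=T(p,1)$ for any $p>1$. The zero surgeries are $\Q$-homology cobordant rel meridians by Corollary~\ref{cor:cable}, and $T$ is not TOP $\Q$-concordant to $T(p,1)$ by Corollary~\ref{cor:trefoil1}. If one wants to emphasize that the obstruction is not merely the Levine--Tristram signatures, replace $T$ by the $3$-twist knot with negative clasp (which has vanishing signatures) and appeal to Corollary~\ref{cor:twist} instead. This settles Question~\ref{question:KirbyQ} negatively in the topological category, and a fortiori in the smooth category --- but these knots are far from topologically slice, so the final clause of the theorem needs its own example.

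\emph{Smooth, topologically slice case.} Here I would take $K=D_+(T)$, the positive untwisted Whitehead double of the right-handed trefoil, together with $K(p,1)$, $p>1$. Both $K$ and $K(p,1)$ have Alexander polynomial $1$ --- the untwisted Whitehead double of any knot does, and cabling by an unknotted pattern does not change this --- hence both are topologically slice by Freedman~\cite{Fr1}; and $M_K$ is $\Z[1/p]$-homology cobordant rel meridians to $M_{K(p,1)}$ by Corollary~\ref{cor:cable}. By Proposition~\ref{prop:tau1} it is enough to see $\tau(K)\ne\tau(K(p,1))$. As in the proof of Theorem~\ref{thm:smooth}, $D_+(T)$ has a Legendrian realization with $tb=2g(K)-1=1$ and $rot=0$; combined with the refined slice--Bennequin inequality $tb+|rot|\le 2\tau-1\le 2g_4-1$ and $g_4(K)\le g(K)=1$ this forces $\tau(K)=1$. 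For the cable I would run the Legendrian-satellite computation of Theorem~\ref{thm:smooth} with the $(p,1)$-cabling pattern --- or, equivalently, use that $K$, and hence $K(p,1)$, is strongly quasipositive so that $\tau$ equals the Seifert genus, or quote the known value of $\tau$ for cables of knots with $\tau=g$ --- to get $\tau(K(p,1))=p\cdot\tau(K)=p$. Since $p>1$ we have $\tau(K)=1\ne p=\tau(K(p,1))$, so $K$ and $K(p,1)$ are not smoothly $\Q$-concordant, which completes the theorem.

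\emph{Where the work is.} Everything but one step is bookkeeping: matching Corollary~\ref{cor:cable} to the hypothesis of Question~\ref{question:KirbyQ} and invoking the obstructions already proved. The substantive point is the evaluation $\tau(D_+(T)(p,1))=p$ for the topologically slice example. To get it directly one must exhibit a Legendrian representative of the $(p,1)$-cable for which $tb+|rot|$ attains the value $2g(K(p,1))-1=2p-1$, i.e.\ for which the $\tau$--slice--Bennequin bound is sharp; equivalently, one must know that the sharpness of slice--Bennequin enjoyed by $D_+(T)$ is inherited by its $(p,1)$-cable (or import this from the cabling formulas for $\tau$ in the literature). This inheritance of sharpness under $(p,1)$-cabling is the part I expect to require the most care.
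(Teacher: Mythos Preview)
Your argument is correct and matches the paper's: the trefoil and its $(p,1)$-cable for the topological case (via Corollaries~\ref{cor:cable} and~\ref{cor:trefoil1}), and $D_+(T)$ together with its $(p,1)$-cable for the topologically slice smooth case, distinguished by $\tau$ through Proposition~\ref{prop:tau1}. The only difference is that where you sketch a slice--Bennequin rederivation of the $\tau$-values (and rightly flag the cable step as the delicate one), the paper simply cites the computations $\tau(D_+(T))=1$ from~\cite{He1} and $\tau(K(p,1))=p\,\tau(K)$ from~\cite{He2}, and also observes that the winding-number-one examples of Corollary~\ref{cor:difftau} combined with Proposition~\ref{prop:tau1} already supply topologically slice counterexamples.
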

\begin{proof}[Proof of Theorem~\ref{thm:mainapplication}] Let $T$ be the trefoil knot, or indeed any knot with some non-zero Levine-Tristram signature. By Corollary~\ref{cor:cable}, for any $p>1$, $M_T$ is smoothly $\Q$-homology cobordant to $M_{T(p,1)}$. But  by Corollary~\ref{cor:trefoil1}, or more generally by Corollary~\ref{cor:finiteorder}, $T$ is not TOP $\Q$-concordant to $T(p,1)$. Therefore the answer to Question~\ref{question:KirbyQ} is ``No'' in either category.  Proposition~\ref{prop:polysinvt} can also be used to give examples that have finite order in the algebraic knot concordance group.

We claim that in the smooth category there exist topologically slice examples. These are provided  by combining Corollary~\ref{cor:difftau} and Proposition~\ref{prop:tau1}. Different pairs of examples can be obtained as follows. Let $K_0$ be the untwisted, positively-clasped Whitehead double of the right-handed trefoil knot, and let $K_1$ be the $(p,1)$-cable of $K_0$.  The Alexander polynomials of $K_0$ and $K_1$ are  equal to $1$, and so work of Freedman~\cite{FQ} implies these knots are topologically slice. The zero-framed surgeries on these knots are smoothly $\mathbb{Q}$-homology cobordant rel meridians by Corollary~\ref{cor:cable}.  If these knots were smoothly $\mathbb{Q}$-concordant then by Proposition~\ref{prop:tau1}, $\tau(K_0)=\tau(K_1)$. But this is not true. In ~\cite{He1} it is shown that $\tau(K)=1$ whereas  ~\cite[Theorem 1.2]{He2}  shows that $\tau(K(p,1))=p\,\tau(K)=p$.  Thus if $p\neq 1$, these knots are not smoothly $\mathbb{Q}$-concordant. 
\end{proof}

\bibliographystyle{amsplain}
\bibliography{QConcBib}

\end{document}